\tikzset{
  vert/.style={circle, draw=black!100,fill=black!100,thick, inner sep=0pt, minimum size=2mm}, 
  smallvert/.style={circle, draw=black!100,fill=black!100,thick, inner sep=0pt, minimum size=1mm},
  empty/.style={draw=none, fill=none, minimum size=0mm, inner sep=0pt}
}
\theoremstyle{plain}
\newtheorem{theorem}  {Theorem}  [section]
\newtheorem{lemma}  [theorem]   {Lemma}
\newtheorem{corollary}[theorem] {Corollary}
\newtheorem{fact}[theorem]{Fact}{\bf}{\it}
\theoremstyle{definition}
\newtheorem{remark}[theorem] {Remark}
\newtheorem{definition}[theorem] {Definition}
\let\nadj\nsim%
\let\polishlcross=\l
\def\l{\ifmmode\ell\else\polishlcross\fi}
\newcommand{\mylabel}[2]{#2\def\@currentlabel{#2}\label{#1}}
\newcommand{\cH}{\mathcal{H}}
\newcommand{\cU}{\mathcal{U}}
 \DeclareMathOperator{\bHom}{Hom}
 \DeclareMathOperator{\recol}{Recol}
 \DeclareMathOperator{\recon}{Recon}
 \DeclareMathOperator{\Col}{Col}
 \DeclareMathOperator{\CSP}{CSP}
 \DeclareMathOperator{\PSPACE}{PSPACE}
 \DeclareMathOperator{\NP}{NP}
\newcommand\ec[1]{[#1]}    
\newcommand{\cl}[1]{[\![#1]\!]}  
\newcommand{\con}{\cdot}
\newcommand{\rev}[1]{{#1^{-1}}}
\begin{document}

\author{Jae-baek Lee}
\address{J. Lee: University of Victoria, Canada}
\email{dlwoqor0923@uvic.ca}
\author{Jonathan A. Noel}
\address{J. Noel: University of Victoria, Canada}
\email{noelj@uvic.ca}
\author{Mark Siggers}
\address{M. Siggers: Kyungpook National University, Republic of Korea}
\email{mhsiggers@knu.ac.kr}
\thanks{The last author was supported by Korean NRF Basic Science Research Program (2018-R1D1A1A09083741) funded by the Korean government (MEST)
          and the Kyungpook National University Research Fund.}

\title{Recolouring Homomorphisms to triangle-free reflexive graphs}

\keywords{Reflexive graph, Graph recolouring, Homomorphism reconfiguration, Computational Complexity}

\subjclass[2010]{Primary 05C15; Secondary 05C85}

\begin{abstract}
  For a graph $H$, the $H$-recolouring problem $\operatorname{Recol}(H)$ asks, for two given homomorphisms from a given  graph $G$ to $H$, if one can get between them by a sequence of homomorphisms of $G$ to $H$ in which consecutive homomorphisms differ on only one vertex.  We show that, if $G$ and $H$ are reflexive and $H$ is triangle-free, then this problem can be solved in polynomial time.   This shows, at the same time, that the closely related $H$-reconfiguration problem $\operatorname{Recon}(H)$ of deciding whether two given homomorphisms from a given graph $G$ to $H$ are in the same component of the Hom-graph $\operatorname{{Hom}}(G,H)$, can be solved in polynomial time for triangle-free reflexive graphs $H$. 
\end{abstract}
\maketitle

\section{Introduction}

Reconfiguration, in various settings, is the common notion of moving between solutions of a combinatorial problem via small changes; see van den Heuvel~\cite{vdH13Survey} and Nishimura~\cite{Nishimura} for detailed surveys. For example, a reconfiguration, or \emph{recolouring}, between two graph colourings is a sequence of colourings in which consecutive elements differ on one vertex.  
 The \emph{$k$-recolouring problem} is to decide, for two given $k$-colourings of $G$, whether or not there is a
 recolouring between them.  Cereceda, van den Heuvel and Johnson \cite{CvdHJ11} showed that $3$-recolouring is polynomial time solvable, while Bonsma and Cereceda \cite{BonCer09} showed that $k$-recolouring is $\PSPACE$-complete for $k \geq 4$.  One should compare this to the $k$-colouring problem which is well known to be polynomial time solvable for $k=2$ but $\NP$-complete for $k \geq 3$.  

 The $k$-colouring problem generalizes to the $H$-colouring problem $\Col(H)$ for a graph $H$ and to the constraint satisfaction problem $\CSP(\cH)$ for a relational structure $\cH$. The computational complexity of $\Col(H)$, and its myriad of generalizations and variations, have been well studied over the last 50 years, culminating in the recent CSP dichotomy theorem of Bulatov \cite{BulCSP} and Zhuk \cite{Zhuk}, which says that $\CSP(\cH)$ is either 
 polynomial time solvable or $\NP$-complete for every relational structure $\cH$.    

Analogously, the $k$-recolouring problem also generalizes to reconfiguration problems for $H$-colourings and CSP, both of which are well studied; see, e.g.,~\cite{Wroch15,BMMN16reconfig,LNS20,BLMNS20,BLS18} and~\cite{GKMP09,MTY11,MTY10,CDEHW20,MNPR17,BMNR14,Schwer12,HIZ18}, respectively. In particular, Gopalan et al.  \cite{GKMP09} proved a dichotomy theorem for the reconfiguration variation of $\CSP(\cH)$ for structures $\cH$ with two vertices. In this paper, we focus on the complexity of $H$-colouring reconfiguration in the setting of graphs which are \emph{reflexive} in the sense that they have a loop on every vertex; graphs with no loops at all are called \emph{irreflexive}. While such problems are also interesting for digraphs~\cite{BLS18}, in this paper we restrict our attention mainly to symmetric (i.e. undirected) graphs.

Recall that a {\em homomorphism} $\phi:G \to H$ from a graph $G$ to a graph $H$ is an edge-preserving vertex map. It is also known as an \emph{$H$-colouring} of $G$ due to the fact that a $K_k$-colouring of $G$, for the irreflexive clique $K_k$, is equivalent to a $k$-colouring. A \emph{recolouring} between two $H$-colourings of an irreflexive graph $G$ is a sequence of $H$-colourings in which consecutive elements differ on one vertex. If $G$ has loops, then a recolouring is defined in the same way, but with the added restriction that, in any given step, the image of a reflexive vertex can only move to a neighbour of its previous image. An instance $(G, \phi, \psi)$ of the {\em $H$-recolouring problem} $\recol(H)$ consists of a graph $G$ and two $H$-colourings of $G$; the task is to decided whether or not there is a recolouring between them.

The {\em Hom-graph} $\bHom(G,H)$ for two graphs $G$ and $H$ has the set of all homomorphisms from $G$ to $H$ as its vertex set, and two homomorphisms $\phi$ and $\psi$ are adjacent if, for all edges $uv$ in $G$, $\phi(u)\psi(v)$ is an edge in $H$. It has been observed by several authors (e.g.~\cite{Wroch15,BrewsterNoel}) that, for undirected graphs, the $H$-recolouring problem is essentially the same as asking whether there is a path between a given pair of homomorphisms in $\bHom(G,H)$; we call this the \emph{$H$-reconfiguration problem} and denote it by $\recon(H)$. In one direction, any path from $\phi$ to $\psi$ in $\bHom(G,H)$ can be converted into a path in which any two consecutive $H$-colourings differ on one vertex by simply changing the colours ``one at a time.'' In the other direction, if $(G,\phi,\psi)$ is a YES instance for $\recol(H)$, then the sequence of colourings is a path from $\phi$ to $\psi$ in $\bHom(G,H)$ unless there exists an isolated vertex $v$ of $G$ with a loop which is mapped to different components of $H$ by $\phi$ and $\psi$.\footnote{In contrast, it is interesting to remark that the problems $\recol(H)$ and $\recon(H)$ are not so closely related in the digraph case. For given $H$ they may have different sets of connected YES instances; though we do not have examples yet where they have different complexity~\cite{BLS18}.} While we will state our results in terms of $\recol(H)$ in the introductory sections, in the later sections it is convenient to speak in terms of paths in the graph $\bHom(G,H)$,  and when we do so, we talk of $\recon(H)$ and reconfigurations rather than $\recol(H)$ and recolourings.

The diameter of $\bHom(G,H)$ can be superpolynomial in $|V(G)|$ (see, e.g.,~\cite{BonCer09}) and so, a priori, a YES instance of $\recol(H)$ may not have a certificate of polynomial size. Thus, the natural complexity class for $\recol(H)$ is $\PSPACE$, rather than  $\NP$.

Some of the known results for irreflexive graphs are summarized as follows. The case that $H$ is a clique follows from the results of~\cite{BonCer09,CvdHJ11} for $k$-recolouring discussed earlier. In \cite{BMMN16reconfig}, the dichotomy for cliques was generalized to `circular cliques' $G_{p/q}$; specifically, $\recol(G_{p/q})$ is polynomial time solvable if $p/q < 4$ and $\PSPACE$-complete if $p/q \geq 4$. The polynomial time algorithm for $3$-recolouring in \cite{CvdHJ11} had a topological flavor which Wrochna distilled and extended in \cite{Wroch15} to prove the very general result that $\recol(H)$ is polynomial time solvable for any graph $H$ not containing a $4$-cycle. In~\cite{LNS20}, we proved that, if $H$ is a $K_{2,3}$-free quadrangulation of the sphere different from the $4$-cycle, then $\recol(H)$ is $\PSPACE$-complete. 
 
 In the current paper, we use several ideas of these earlier papers, and adapt Wrochna's language and many of his ideas, to prove an analogue of his result for reflexive graphs $G$ and $H$. Reflexive graphs are actually a smoother setting for some of these ideas; e.g. the fact that two consecutive vertices in a path in $G$ can map to the same vertex of $H$ eliminates a parity obstruction which turns up in~\cite{Wroch15}.  We prove the following. 
 
\begin{theorem}\label{thm:main}
  If $H$ is a reflexive triangle-free graph, then $\recol(H)$ can be solved in polynomial  time for reflexive instances.
\end{theorem}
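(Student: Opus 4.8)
The plan is to adapt the homotopy-theoretic framework of Wrochna~\cite{Wroch15}. I would work throughout with paths in $\bHom(G,H)$, that is, with $\recon(H)$ rather than with recolourings directly: as discussed above this loses nothing on reflexive instances, the only discrepancy coming from isolated looped vertices of $G$, which are trivial to dispose of. One may also assume $G$ is connected (treating its components separately), and it is convenient to first replace $H$ by its core --- a retract obtained by repeatedly deleting a vertex whose closed neighbourhood is contained in that of another --- which is again triangle-free and reflexive, now with no dominated vertex, and for which the standard retraction argument shows that $\recol(H)$ and the recolouring problem for the core are polynomially equivalent. Fix a basepoint $v_0\in V(G)$. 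If $\phi,\psi\colon G\to H$ are reconfigurable then $\phi(v_0)$ and $\psi(v_0)$ lie in the same component $C$ of $H$, and, using connectivity of $C$ and the fact that the image of a single vertex can be slid about freely (reflexivity removing the parity obstruction that appears in~\cite{Wroch15}), one may reconfigure so that $\phi(v_0)=\psi(v_0)=:h_0$. To each homomorphism I would then attach a topological invariant: fixing a spanning tree $T$ of $G$ and, for each non-tree edge $e$, the fundamental closed walk $W_e$ based at $v_0$, record the homotopy class (as a closed walk in $C$) of $\phi(W_e)$; call this data $\phi_\#$.

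The easy half is necessity: a single-vertex reconfiguration move alters each $\phi(W_e)$ by an elementary homotopy of walks (inserting or deleting a back-and-forth step or a repeated vertex), so $\phi_\#$ is a reconfiguration invariant, and $\phi_\#=\psi_\#$ is therefore necessary once the basepoints agree. Because $H$ is triangle-free, closed walks in $H$ admit an efficiently computable reduced form, so $\phi_\#$ and $\psi_\#$ can be computed and compared in polynomial time; likewise the core, the spanning tree, and the fundamental walks are all polynomial to produce. The algorithm thus reads: output NO if the basepoints lie in different components of $H$ or if $\phi_\#\neq\psi_\#$, and output YES otherwise.

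Correctness hinges on the converse: if $\phi(v_0)=\psi(v_0)$ and $\phi_\#=\psi_\#$ then $\phi$ and $\psi$ are reconfigurable. I would prove this by reconfiguring both to a common canonical form via a ``tightening'' procedure that repeatedly applies reconfiguration moves decreasing a suitable non-negative integer potential, until a \emph{taut} homomorphism is reached; the two facts one needs are that progress can always be made while the potential is not minimal, and that a taut homomorphism is determined, up to a controlled and reconfiguration-realisable ambiguity, by its value at $v_0$ together with $\phi_\#$.

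The main obstacle --- and the reason triangle-freeness is genuinely harder to handle than the $C_4$-freeness of~\cite{Wroch15} --- is that a taut homomorphism need \emph{not} be unique when $H$ contains induced $4$-cycles: given a $4$-cycle $abcd$ in $H$ one can ``rotate'' an image around it, replacing a subwalk $a,b,c$ by $a,d,c$ without changing any lengths. The crux is to pin down exactly this ambiguity and to show that any two taut homomorphisms carrying the same invariant are joined by a sequence of such $4$-cycle rotations, each of which can itself be realised by a reconfiguration (typically not a local swap, but via a detour through a more folded map). I expect this to demand a structural analysis of $H$ that isolates its maximal ``$C_4$-connected'' regions --- which may be as complicated as large complete-bipartite-like blobs or grid-like pieces --- together with a careful study of how homomorphisms move within and between such regions; controlling the interaction of $4$-cycles that share an edge or a vertex is where the real work, and the greatest danger of overlooked cases, lies.
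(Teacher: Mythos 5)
Your high-level plan (Wrochna-style homotopy invariants) is on the right track, but as written the decision rule is wrong and the sufficiency step is missing in exactly the place the paper has to work hardest. Consider the following counterexample: let $H$ be the reflexive $4$-cycle $0\sim1\sim2\sim3\sim0$, let $G$ be the reflexive $4$-cycle $c_1c_2c_3c_4$ with a pendant vertex $r\sim c_1$, and set $\phi(r)=0$, $\phi(c_i)=i-1$, while $\psi(r)=0$, $\psi(c_i)=i\bmod4$. Then $\phi(r)=\psi(r)$, and the unique fundamental closed walk $W_e=(r,c_1,c_2,c_3,c_4,c_1,r)$ has $\phi(W_e)=(0,0,1,2,3,0,0)$ and $\psi(W_e)=(0,1,2,3,0,1,0)$, both of which reduce to $(0,1,2,3,0)$; thus $\phi_\#=\psi_\#$. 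Yet the cycle $c_1c_2c_3c_4$ is mapped \emph{tightly} onto an induced $4$-cycle of $H$ by both $\phi$ and $\psi$ (each $c_i$ is pinned by the loop constraint together with its two neighbours), so $\phi|_C$ is isolated in $\bHom(C,H)$ and no reconfiguration exists, but your algorithm outputs YES. This same example also falsifies your intended sufficiency argument that any two taut maps with matching invariant are joined by realisable $C_4$-rotations: $\psi$ \emph{is} precisely $\phi$ rotated around a $C_4$, and the rotation cannot be realised because $\phi$ is frozen. The paper's Theorem~\ref{thm:mainT} imposes a second, non-topological condition to rule out exactly this: the walk $W_c$ at any vertex $c$ of a tight closed walk must be constant (Section~\ref{sect:tight}).

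There is a second gap hidden in the phrase ``one may reconfigure so that $\phi(v_0)=\psi(v_0)$'' and in the claim that $\phi_\#$ is then a reconfiguration invariant. A move at $v_0$ does not leave $[\phi(W_e)]$ alone --- it conjugates it by the trace of $v_0$ (this is Lemma~\ref{lem:preswind}) --- so $\phi_\#$ is only determined up to conjugation, different choices of alignment walk give genuinely different invariants, and moreover when $v_0$ lies on a tight cycle the alignment step is simply impossible. The hard question, which you treat as routine preprocessing, is whether there exists \emph{any} $(\phi(r),\psi(r))$-walk that conjugates the $\phi$-classes of \emph{all} fundamental cycles to the corresponding $\psi$-classes simultaneously; deciding this is the main technical content of the paper (Sections~\ref{sect:TV} and~\ref{sect:bfhom}, culminating in Lemma~\ref{lem:algStep1}). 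And for the converse, the paper does not attempt the taut-canonical-form / $C_4$-rotation classification you flag as dangerous (and which the example above shows must fail as you have stated it); instead Lemma~\ref{lem:RealizAlg} greedily realises a given topologically valid system of walks via an auxiliary digraph and shows that the only way the greedy process can stall is on a tight closed walk, sidestepping the structural analysis of $C_4$-regions entirely.
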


 In contrast, we recently proved in \cite{LNS20} that $\recol(H)$ is $\PSPACE$-complete when $H$ is any $K_4$-free reflexive triangulation of the sphere, other than a triangle itself. We refer to this paper for a deeper suggestion of the connection of topology to the complexity of the reconfiguration problem. We remark that the argument of Wrochna~\cite{Wroch15} directly generalizes to get that $\recol(H)$ is solvable in polynomial time for general instances if $H$ is a graph which may have loops on some vertices which does not contain a $4$-cycle, a triangle with a loop on one vertex, or two adjacent vertices with loops; see the footnote on p.~330 of~\cite{Wroch20}.
 
 Before we move on to the proof of the main theorem, we observe that it implies the following result for a less general class of graphs $H$, but more general instances. Recall that the \emph{girth} of a graph is the length of its shortest cycle. 
 
\begin{corollary}
 If $H$ is a reflexive symmetric graph with girth at least $5$, then $\recol(H)$ can be solved in polynomial time for all instances. 
\end{corollary}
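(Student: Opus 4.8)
The plan is to reduce an arbitrary instance to a reflexive one and then apply Theorem~\ref{thm:main}. Here ``girth at least $5$'' should be read as: the simple graph obtained from $H$ by deleting its loops contains no cycle of length $3$ or $4$; in particular $H$ is triangle-free, so Theorem~\ref{thm:main} applies to it.

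First I would dispose of the isolated vertices of $G$: an isolated reflexive vertex merely forces its two prescribed images to lie in the same component of $H$, and an isolated irreflexive vertex imposes nothing at all, so after recording these conditions we may delete all isolated vertices and henceforth assume every vertex of $G$ has a neighbour (note that deleting $G$'s isolated vertices creates no new ones). Next, let $G^{+}$ be the reflexive graph obtained from $G$ by putting a loop on every vertex. Since $H$ is reflexive, these loops impose no extra condition for a vertex map to be a homomorphism, so $\bHom(G,H)$ and $\bHom(G^{+},H)$ have the same vertex set. I claim $(G,\phi,\psi)$ is a YES instance of $\recol(H)$ if and only if $(G^{+},\phi,\psi)$ is; this, together with Theorem~\ref{thm:main} and the fact that $G^{+}$ is computed from $G$ in linear time, finishes the proof.

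One direction is immediate: any single recolouring step of $G^{+}$ moves one vertex $v$ to an $H$-neighbour of its former image while keeping the map a homomorphism, and this is also a legal step of $G$, whether $v$ is reflexive or irreflexive. For the converse, consider a step of a recolouring of $G$ that changes an irreflexive vertex $v$ from $c$ to $c'$, and let $\phi$ be the colouring just before this step; then $c,c'\in S:=\bigcap_{u\adj v} N_H[\phi(u)]$. If $c\adj c'$ in $H$ the step is already legal in $G^{+}$, so suppose not. If the neighbours of $v$ receive at least two distinct values, then either two of those values are non-adjacent, in which case $S$ has at most one element because two non-adjacent vertices of a $C_4$-free graph have at most one common neighbour; or every two of them are adjacent, in which case there are exactly two such values, say $a\adj b$ (three would form a triangle), and $S\subseteq N_H[a]\cap N_H[b]=\{a,b\}$ (a third common neighbour would complete a triangle), forcing $c\adj c'$. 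Both outcomes contradict our assumptions, so all neighbours of $v$ receive a single value $a$, whence $S=N_H[a]$ and $c,c'\in N_H(a)$. In this case I replace the offending step by $v\colon c\to a$ followed by $v\colon a\to c'$, both legal in $G^{+}$, the intermediate map being a homomorphism since every neighbour of $v$ maps to $a$ and $a$ carries a loop. Applying this replacement to every step that changes an irreflexive vertex along a non-edge turns a recolouring of $G$ into one of $G^{+}$.

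The whole argument is short, and the only genuinely delicate point is the structural analysis of $S$ above; it is also the place where girth $5$ is really needed rather than mere triangle-freeness, since if $H$ contained a $4$-cycle then an irreflexive vertex of $G$ whose neighbours are mapped to two opposite corners of that cycle could hop along a non-edge in a single step, a move that no sequence of reflexive steps of $G^{+}$ can reproduce.
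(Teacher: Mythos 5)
Your proof is correct and follows essentially the same route as the paper's: remove isolated vertices, pass to the reflexive graph $G^{+}$, and show that a recolouring step of $G$ which moves an irreflexive vertex $v$ across a non-edge forces all neighbours of $v$ to share a common image $a$, allowing the step to be factored through $a$. The only cosmetic difference is that you analyse $S=\bigcap_{u\adj v}N_H[\phi(u)]$ by cases, whereas the paper argues more directly that two distinct neighbour-images would yield a short cycle $\phi(x)\phi(y)\psi(x)\phi(z)$ in $H$.
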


\begin{proof}
  Let $(G,\phi,\psi)$ be an instance of $\recol(H)$. If $G$ has isolated vertices, then we can simply change the colour of every such vertex $v$ from $\phi(v)$ to $\psi(v)$ at the beginning without any issues. So, we assume that $G$ has no isolated vertices. 
  
  Let $G'$ be the graph obtained from $G$ by adding
  all possible loops. We claim that the instance
  $(G',\phi,\psi)$ of $\recol(H)$ is a YES  instance if and only if $(G,\phi,\psi)$ is a YES instance.  In one direction, it is clear that if $(G',\phi,\psi)$ is a YES instance, then so is
  $(G,\phi,\psi)$. 
  
  Now, suppose that $(G,\phi,\psi)$ is a YES instance. Then there is a path from $\phi$ to $\psi$ in $\bHom(G,H)$ in which consecutive $H$-colourings differ on one vertex. It suffices to consider the case that $\phi$ and $\psi$ differ on one vertex $x$, from which the general case follows. If $\phi\psi$ is an edge of $\bHom(G',H)$, then we are done; so, we assume that it is not. Since $\phi$ and $\psi$ only differ on $x$, the only way that $\phi\psi$ can be an edge of $\bHom(G,H)$ and not $\bHom(G',H)$ is if $x$ had no loop in $G$ and $\phi(x) \nsim \psi(x)$. Observe that $\phi(x)\phi(y)\psi(x)$ is walk in $H$ for any
  neighbour $y$ of $x$ in $G'$. If there are neighbours $y$ and $z$ of $x$ such that $\phi(y) \neq \phi(z)$, then $\phi(x)\phi(y)\psi(x)\phi(z)$ forms a $4$-cycle in $H$. Therefore, $\phi$ maps all neighbours of $x$ to
  the same vertex of $H$. Let $y$ be any neighbour of $x$ in $G$ (which exists, as $x$ is not isolated). Let $\phi'$ be the map obtained from $\phi$ by changing the colour of $x$ to $\phi(y)$. Then we have $\phi \sim \phi' \sim \psi$ in $\bHom(G',H)$. Thus, there is a recolouring between $\phi$ and $\psi$.
\end{proof}

 \section{Outline of proof}\label{sect:outline} 

We assume, throughout the rest of the paper, that $H$ is a triangle-free reflexive graph and that $G$ is a reflexive graph. We also assume that $G$ and $H$ are connected; otherwise, we could simply deal with each component separately. Let $\phi$ and $\psi$ be two $H$-colourings of $G$. Our goal is to present an algorithm to determine, in polynomial time, whether there is a recolouring between $\phi$ and $\psi$ or, equivalently, whether there is a $(\phi,\psi)$-reconfiguration---a walk from $\phi$ to $\psi$ in $\bHom(G,H)$.

The main obstructions to the existence of a $(\phi,\psi)$-reconfiguration in $\bHom(G,H)$ are related to the ways in which $\phi$ and $\psi$ map the cycles (or, more generally, closed walks) of $G$ into $H$. Of course, for any cycle $C$ of $G$, a $(\phi,\psi)$-reconfiguration in $\bHom(G,H)$ yields a $(\phi_C,\psi_C)$-reconfiguration $W_C$ in $\bHom(C,H)$ between the restrictions $\phi_C$ and $\psi_C$ of $\phi$ and $\psi$ to $C$. If the length of $C$ is equal to $mk$ for some $m\geq1$ and $k\geq4$ and $\phi$ maps $C$ exactly $m$ times around an induced cycle of length $k$ in $H$, then, since $H$ is triangle-free, $\phi_C$ is an isolated vertex of $\bHom(C,H)$; such a cycle $C$ is said to be a \emph{tight} cycle of $\phi$. All homomorphisms in the same component of $\bHom(G,H)$ as $\phi$ must agree on the vertices of every tight cycle (or, more generally, tight closed walk). If $\psi$ does not agree with $\phi$ on some such vertex, then we can conclude that the desired reconfiguration does not exist.

A more subtle obstruction comes from cycles in $G$ which ``wind around'' a given induced cycle of $H$ in a non-tight way. As a specific example, for $k\geq4$, imagine that a cycle $C$ of length $3k+100$ is mapped by $\phi$ three times around an induced cycle $C'$ of length $k$ in $H$ in such a way that the final $100$ vertices are all mapped to the same vertex as the $(3k)$th vertex of $C$; note that this is a homomorphism of $C$ to $H$ because $H$ is reflexive. In this case, the cycle $C$ is not tight under $\phi$ and so $\phi_C$ is not isolated in $\bHom(C,H)$. In particular, the ``slack'' introduced by the 100 extra vertices allows $\phi_C$ to be reconfigured to any homomorphism which wraps $C$ three times around $C'$ in the same direction as $\phi_C$ does. Moreover, unlike in the tight case, the image of $C$ is not constrained to stay within $C'$. However, since $H$ is triangle-free, the ways in which the image of a vertex $v\in V(C)$ can ``leave'' the set $V(C')$ under a walk in $\bHom(C,H)$ starting with $\phi_C$ are heavily restricted. Any homomorphism in the same component of $\bHom(C,H)$ as $\phi_C$ essentially wraps $C$ three times around the cycle $C'$ in the same direction as $\phi_C$ does, with the exception of a few ``excursions'' which leave $V(C')$ for a few steps and then return to $C'$ by doubling back along essentially the same route; see Figure~\ref{fig:excursion} for an illustration and Lemma~\ref{lem:preswind} for a more formal statement and proof of this fact. 

\begin{figure}[htbp]
\begin{center}
\begin{tikzpicture}[scale=0.9]
   \newdimen\P
   \P=0.5cm
   \newdimen\PP
   \PP=1.0cm
   \newdimen\PPplus
   \PPplus=1.2cm
   \newdimen\PPPminus
   \PPPminus=1.5cm
   \newdimen\PPP
   \PPP=1.7cm
   \newdimen\PPPplus
   \PPPplus=1.9cm
   
   \begin{scope}[]
   \draw
     (0,0) node (cprime){$C'$}
      (90:\P) node [smallvert,label={[label distance=0pt]90:{}}] (v1){}
      (162:\P) node [smallvert,label={[label distance=0pt]162:{}}] (v2){}
      (234:\P) node [smallvert,label={[label distance=0pt]234:{}}] (v3){}
      (306:\P) node [smallvert,label={[label distance=0pt]306:{}}] (v4){}
      (378:\P) node [smallvert,label={[label distance=0pt]378:{}}] (v5){}
[very thick](v1)--(v2)--(v3)--(v4)--(v5)--(v1);

\draw
      (90:\PP) node [smallvert,label={[label distance=0pt]90:{}}] (v11){}
      (147:\PP) node [smallvert,label={[label distance=0pt]162:{}}] (v20){}
      (162:\PPplus) node [smallvert,label={[label distance=0pt]162:{}}] (v21){}
      (177:\PP) node [smallvert,label={[label distance=0pt]162:{}}] (v22){}
      (234:\PPplus) node [smallvert,label={[label distance=0pt]234:{}}] (v31){}
      (253:\PP) node [smallvert,label={[label distance=0pt]234:{}}] (v32){}
      (291:\PP) node [smallvert,label={[label distance=0pt]306:{}}] (v40){}
      (306:\PPplus) node [smallvert,label={[label distance=0pt]306:{}}] (v41){}
      (321:\PP) node [smallvert,label={[label distance=0pt]306:{}}] (v42){}
      (378:\PPplus) node [smallvert,label={[label distance=0pt]378:{}}] (v51){}
(v11)--(v20)--(v21)--(v22)--(v31)--(v32)--(v40)--(v41)--(v42)--(v51)--(v11);
\draw (v20)--(v21);

\draw
      (75:\PPPminus) node [smallvert,label={[label distance=0pt]90:{}}] (v110){}
      (90:\PPP) node [smallvert,label={[label distance=0pt]90:{}}] (v111){}
      (105:\PPPminus) node [smallvert,label={[label distance=0pt]90:{}}] (v112){}
      (162:\PPPplus) node [smallvert,label={[label distance=0pt]162:{}}] (v210){}
      (177:\PPP) node [smallvert,label={[label distance=0pt]162:{}}] (v221){}
      (219:\PPP) node [smallvert,label={[label distance=0pt]234:{}}] (v310){}
      (234:\PPPplus) node [smallvert,label={[label distance=0pt]234:{}}] (v311){}
      (249:\PPP) node [smallvert,label={[label distance=0pt]234:{}}] (v312){}
      (291:\PPP) node [smallvert,label={[label distance=0pt]306:{}}] (v401){}
      (306:\PPPplus) node [smallvert,label={[label distance=0pt]306:{}}] (v411){}
      (321:\PPP) node [smallvert,label={[label distance=0pt]306:{}}] (v421){}
      (363:\PPP) node [smallvert,label={[label distance=0pt]378:{}}] (v510){}
      (378:\PPPplus) node [smallvert,label={[label distance=0pt]378:{}}] (v511){}
      (393:\PPP) node [smallvert,label={[label distance=0pt]378:{}}] (v512){}
      (v110)--(v111)--(v112)--(v210)--(v221)--(v310)--(v311)--(v312)--(v401)--(v411)--(v421)--(v510)--(v511)--(v512)--(v110);
      
      \draw(v310)--(v311)--(v312);

\draw
(v1)--(v11)
(v2)--(v20)
(v2)--(v22)
(v3)--(v32)
(v4)--(v40)
(v4)--(v42)
(v5)--(v51)
(v11)--(v110)
(v11)--(v112)
(v21)--(v210)
(v22)--(v221)
(v31)--(v310)
(v32)--(v312)
(v40)--(v401)
(v41)--(v411)
(v51)--(v510)
(v41)--(v42)
(v51)--(v512);
\draw
(v42)--(v421);
\end{scope}

   \begin{scope}[xshift=5cm]
      \draw
     (0,0) node (cprime){$C'$}
      (90:\P) node [smallvert,label={[label distance=0pt]90:{}}] (v1){}
      (162:\P) node [smallvert,label={[label distance=0pt]162:{}}] (v2){}
      (234:\P) node [smallvert,label={[label distance=0pt]234:{}}] (v3){}
      (306:\P) node [smallvert,label={[label distance=0pt]306:{}}] (v4){}
      (378:\P) node [smallvert,label={[label distance=0pt]378:{}}] (v5){}
[very thick](v1)--(v2)--(v3)--(v4)--(v5)--(v1);

\draw
      (90:\PP) node [smallvert,label={[label distance=0pt]90:{}}] (v11){}
      (147:\PP) node [smallvert,label={[label distance=0pt]162:{}}] (v20){}
      (162:\PPplus) node [smallvert,label={[label distance=0pt]162:{}}] (v21){}
      (177:\PP) node [smallvert,label={[label distance=0pt]162:{}}] (v22){}
      (234:\PPplus) node [smallvert,label={[label distance=0pt]234:{}}] (v31){}
      (253:\PP) node [smallvert,label={[label distance=0pt]234:{}}] (v32){}
      (291:\PP) node [smallvert,label={[label distance=0pt]306:{}}] (v40){}
      (306:\PPplus) node [smallvert,label={[label distance=0pt]306:{}}] (v41){}
      (321:\PP) node [smallvert,label={[label distance=0pt]306:{}}] (v42){}
      (378:\PPplus) node [smallvert,label={[label distance=0pt]378:{}}] (v51){}
(v11)--(v20)--(v21)--(v22)--(v31)--(v32)--(v40)--(v41)--(v42)--(v51)--(v11);
\draw [very thick](v20)--(v21);

\draw
      (75:\PPPminus) node [smallvert,label={[label distance=0pt]90:{}}] (v110){}
      (90:\PPP) node [smallvert,label={[label distance=0pt]90:{}}] (v111){}
      (105:\PPPminus) node [smallvert,label={[label distance=0pt]90:{}}] (v112){}
      (162:\PPPplus) node [smallvert,label={[label distance=0pt]162:{}}] (v210){}
      (177:\PPP) node [smallvert,label={[label distance=0pt]162:{}}] (v221){}
      (219:\PPP) node [smallvert,label={[label distance=0pt]234:{}}] (v310){}
      (234:\PPPplus) node [smallvert,label={[label distance=0pt]234:{}}] (v311){}
      (249:\PPP) node [smallvert,label={[label distance=0pt]234:{}}] (v312){}
      (291:\PPP) node [smallvert,label={[label distance=0pt]306:{}}] (v401){}
      (306:\PPPplus) node [smallvert,label={[label distance=0pt]306:{}}] (v411){}
      (321:\PPP) node [smallvert,label={[label distance=0pt]306:{}}] (v421){}
      (363:\PPP) node [smallvert,label={[label distance=0pt]378:{}}] (v510){}
      (378:\PPPplus) node [smallvert,label={[label distance=0pt]378:{}}] (v511){}
      (393:\PPP) node [smallvert,label={[label distance=0pt]378:{}}] (v512){}
      (v110)--(v111)--(v112)--(v210)--(v221)--(v310)--(v311)--(v312)--(v401)--(v411)--(v421)--(v510)--(v511)--(v512)--(v110);
      
      \draw[very thick](v310)--(v311)--(v312);

\draw
[very thick](v1)--(v11)
(v2)--(v20)
(v2)--(v22)
(v3)--(v32)
(v4)--(v40)
(v4)--(v42)
(v5)--(v51)
(v11)--(v110)
(v11)--(v112)
(v21)--(v210)
(v22)--(v221)
(v31)--(v310)
(v32)--(v312)
(v40)--(v401)
(v41)--(v411)
(v51)--(v510)
(v41)--(v42)
(v51)--(v512);
\draw
(v42)--(v421);
\end{scope}
\end{tikzpicture}
\end{center}
\caption{Two homomorphisms from a long cycle $C$ to a graph $H$. The bold edges are the non-loop edges in the image of each homomorphism. The first homomorphism ``wraps around'' a cycle $C'$ of $H$ a bounded number of times. If $C$ is sufficiently long, then the first homomorphism can be reconfigured to the second. Triangle-freeness of $H$ makes it impossible for the image of $C$ to completely leave cycle $C'$.}
\label{fig:excursion}
\end{figure}
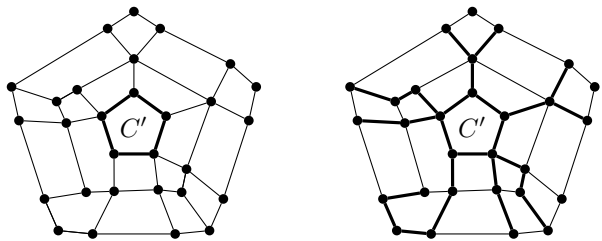


We refer to the latter obstruction as a topological obstruction.  To properly describe such obstructions we define an analogue, for triangle-free graphs, of homotopy theory and the fundamental group of a space.  The statement that the images $\phi(C)$ and $\psi(C)$ of a cycle $C$ ``wrap around the same cycles of $H$ the same number of times'' essentially translates to the closed walks that they trace out being homotopic in the fundamental group of $H$.  
The goal is to show that, if no frozen vertex or topological obstructions exist, then there is a $(\phi,\psi)$-reconfiguration.  
As it turns out, dealing with obstructions for cycles only is not enough.
It is easy to come up with examples in which there are no tight cycles, and for all cycles $C$ of $G$ the closed walks $\phi(C)$ and $\psi(C)$ are homotopic,  but there is no $(\phi,\psi)$-reconfiguration, even when $H$ is triangle-free; see Figure~\ref{fig:consistent}.   Indeed, there is a stronger topological obstruction.  If two cycles $C$ and $C'$ of $G$ share a vertex $r$, then a $(\phi,\psi)$-reconfiguration $W$ induces cycle reconfigurations $W_C$ and $W_{C'}$ that agree on $r$.  That is; where the {\em trace}
of a vertex $r$ under a reconfiguration $W = \phi_1 \sim \dots \sim \phi_d$ in $\bHom(G,H)$ is the walk 
$W_r = \phi_1(r) \sim \dots \sim \phi_d(r)$, $W_C$ and $W_{C'}$ have the same trace $W_r$. For a $(\phi,\psi)$-reconfiguration to exist, we need to be able to reconfigure the homomorphisms of all cycles of $G$ simultaneously in a consistent manner. 



\begin{figure}[htbp]
\begin{center}
\begin{tikzpicture}[scale=0.9]
   \newdimen\P
   \P=1cm
   \newdimen\PP
   \PP=1.0cm
   \newdimen\PPplus
   \PPplus=1.2cm
   \newdimen\PPPminus
   \PPPminus=1.5cm
   \newdimen\PPP
   \PPP=1.7cm
   \newdimen\PPPplus
   \PPPplus=1.9cm
   
   \begin{scope}[]
   \draw
      (0:\P) node [smallvert,label={[label distance=0pt]90:{}}] (v1){}
      (45:\P) node [smallvert,label={[label distance=0pt]162:{}}] (v2){}
      (90:\P) node [smallvert,label={[label distance=0pt]234:{}}] (v3){}
      (135:\P) node [smallvert,label={[label distance=0pt]306:{}}] (v4){}
      (180:\P) node [smallvert,label={[label distance=0pt]378:{}}] (v5){}
      (225:\P) node [smallvert,label={[label distance=0pt]378:{}}] (v6){}
      (270:\P) node [smallvert,label={[label distance=0pt]378:{}}] (v7){}
      (315:\P) node [smallvert,label={[label distance=0pt]378:{}}] (v8){}
      (\P,-1.5cm) node (G){$G$}
(v1)--(v2)--(v3)--(v4)--(v5)--(v6)--(v7)--(v8)--(v1);

\end{scope}
   \begin{scope}[xshift=2cm]
   \draw
      (0:\P) node [smallvert,label={[label distance=0pt]90:{}}] (v1){}
      (45:\P) node [smallvert,label={[label distance=0pt]162:{}}] (v2){}
      (90:\P) node [smallvert,label={[label distance=0pt]234:{}}] (v3){}
      (135:\P) node [smallvert,label={[label distance=0pt]306:{}}] (v4){}
      (180:\P) node [smallvert,label={[label distance=0pt]378:{}}] (v5){}
      (225:\P) node [smallvert,label={[label distance=0pt]378:{}}] (v6){}
      (270:\P) node [smallvert,label={[label distance=0pt]378:{}}] (v7){}
      (315:\P) node [smallvert,label={[label distance=0pt]378:{}}] (v8){}
(v1)--(v2)--(v3)--(v4)--(v5)--(v6)--(v7)--(v8)--(v1);
\end{scope}

   \begin{scope}[xshift=5cm,yshift=1cm]
   
\draw 
    (0,0) node [smallvert,label={[label distance=0pt]90:{}}] (v1){}
    (0.5cm,-1cm) node [smallvert,label={[label distance=0pt]90:{}}] (v4){}
    (-0.5cm,-1cm) node [smallvert,label={[label distance=0pt]90:{}}] (v2){}
    (0,-2cm) node [smallvert,label={[label distance=0pt]90:{}}] (v3){}
      (1cm,-2.5cm) node (H){$H$}
    (v1)--(v2)--(v3)--(v4)--(v1)
    
        (2cm,0) node [smallvert,label={[label distance=0pt]90:{}}] (u1){}
    (2.5cm,-1cm) node [smallvert,label={[label distance=0pt]90:{}}] (u4){}
    (1.5cm,-1cm) node [smallvert,label={[label distance=0pt]90:{}}] (u2){}
    (2cm,-2cm) node [smallvert,label={[label distance=0pt]90:{}}] (u3){}
    (u1)--(u2)--(u3)--(u4)--(u1)
    
            (1cm,0) node [smallvert,label={[label distance=0pt]90:{}}] (w1){}
    (1cm,-2cm) node [smallvert,label={[label distance=0pt]90:{}}] (w2){}
    (v1)--(w1)--(u1)
    (v3)--(w2)--(u3);
    
\end{scope}

   \begin{scope}[xshift=0cm,yshift=-2cm]
   
\draw 
    (0,0) node [smallvert,label={[label distance=0pt]90:{}}] (v1){}
    (0.5cm,-1cm) node [smallvert,label={[label distance=0pt]90:{}}] (v4){}
    (-0.5cm,-1cm) node [smallvert,label={[label distance=0pt]90:{}}] (v2){}
    (0,-2cm) node [smallvert,label={[label distance=0pt]90:{}}] (v3){}
      (1cm,-2.5cm) node (H){$\phi$}
    
        (2cm,0) node [smallvert,label={[label distance=0pt]90:{}}] (u1){}
    (2.5cm,-1cm) node [smallvert,label={[label distance=0pt]90:{}}] (u4){}
    (1.5cm,-1cm) node [smallvert,label={[label distance=0pt]90:{}}] (u2){}
    (2cm,-2cm) node [smallvert,label={[label distance=0pt]90:{}}] (u3){}
    
            (1cm,0) node [smallvert,label={[label distance=0pt]90:{}}] (w1){}
    (1cm,-2cm) node [smallvert,label={[label distance=0pt]90:{}}] (w2){}
    (v3)--(w2)--(u3);
    
\draw[very thick]
    (v1)--(v2)--(v3)--(v4)--(v1)
    (u1)--(u2)--(u3)--(u4)--(u1)
    (v1)--(w1)--(u1);
    
\end{scope}

   \begin{scope}[xshift=5cm,yshift=-2cm]
   
\draw 
    (0,0) node [smallvert,label={[label distance=0pt]90:{}}] (v1){}
    (0.5cm,-1cm) node [smallvert,label={[label distance=0pt]90:{}}] (v4){}
    (-0.5cm,-1cm) node [smallvert,label={[label distance=0pt]90:{}}] (v2){}
    (0,-2cm) node [smallvert,label={[label distance=0pt]90:{}}] (v3){}
      (1cm,-2.5cm) node (H){$\psi$}
    
        (2cm,0) node [smallvert,label={[label distance=0pt]90:{}}] (u1){}
    (2.5cm,-1cm) node [smallvert,label={[label distance=0pt]90:{}}] (u4){}
    (1.5cm,-1cm) node [smallvert,label={[label distance=0pt]90:{}}] (u2){}
    (2cm,-2cm) node [smallvert,label={[label distance=0pt]90:{}}] (u3){}
    
            (1cm,0) node [smallvert,label={[label distance=0pt]90:{}}] (w1){}
    (1cm,-2cm) node [smallvert,label={[label distance=0pt]90:{}}] (w2){}
    (v1)--(w1)--(u1);
    
\draw[very thick]
    (v1)--(v2)--(v3)--(v4)--(v1)
    (u1)--(u2)--(u3)--(u4)--(u1)
    (v3)--(w2)--(u3);
    
\end{scope}

\end{tikzpicture}
\end{center}
\caption{Two homomorphisms $\phi$ and $\psi$ from a graph $G$ to a triangle-free graph $H$. The bold edges indicate the non-loop edges in the images of the homomorphisms. There are no tight closed walks and the restrictions of these homomorphisms to any given cycle of $G$ can be reconfigured, but $\phi$ cannot be reconfigured to $\psi$.}
\label{fig:consistent}
\end{figure}
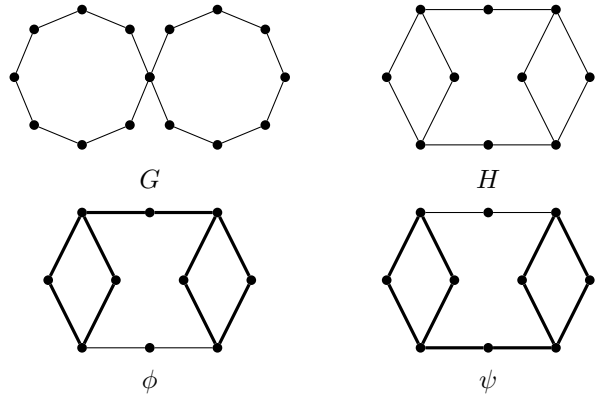

To deal with this we define, in Section \ref{sect:top}, an analogue $\pi(H,r)$ of the fundamental group of a topological space. The definition is similar to, but simpler than,  a more sophisticated definition of homotopy theory for digraphs found in \cite{LT04}.  We start with a graph $\Pi(H,r)$ whose vertices are closed walks of $H$ starting and ending at the {\em basepoint} $r$, and in which two are adjacent if they are adjacent in $\bHom(C,H)$ (where $C$ is a cycle), or if you get one from the other by subdividing an edge and mapping the new vertex to the same vertex as one of its neighbours.  The elements of $\pi(H,r)$ are the components of $\Pi(H,r)$.  The group operation is concatenation: from elements $[C]$ and $[C']$ of $\phi(H,r)$, the element $[C] \con [C']$ is the class of the closed walk $C \con C'$ with basepoint $r$ that you get by traversing $C$ and then $C'$.  We show that when  $H$ is triangle-free, classes of $\pi(H,r)$ have a canonical reduced form that can be computed in polynomial time.  

Any cycle $C$ of $H$ with a basepoint $r'$ different from $r$ can be viewed, via a `basepoint change' by a any $(r,r')$-walk $W$, as a cycle $W \con C \con \rev{W}$ in $\pi(H,r)$, where $\rev{W}$ is $W$ traversed in reverse. The trace $W_r$ of $r$ under a $(\phi,\psi)$-reconfiguration in $\bHom(G,H)$ yields a basepoint change that induces equality $[\phi(C)] = [W_r \con \psi(C) \con \rev{W_r}]$ for all cycles $C$ in $\Pi(H,r)$ at the same time. 
Following Wrochna, in \cite{Wroch20}, a walk $W_r$ in $H$ that induces this common basepoint change of all cycles is called  'topologically valid'.   Given a topologically valid walk, the only remaining obstructions are tight closed walks. 
In Section  \ref{sect:TV}, 
following the approach refined by Wrochna, give several useful equivalent conditions for the existence of a walk that is topologically valid for $\phi$ and $\psi$.  This allows us to give a polynomial time algorithm to determine if a given $(\phi(r),\psi(r))$-walk is topologically valid.  

Determining whether a given $(\phi(r),\psi(r))$-walk is topologically valid, and determining whether there exists a topologically valid $(\phi(r),\psi(r))$-walk are different problems.  For the later problem, we consider a basepoint-free version of homotopy theory in Section \ref{sect:bfhom}.  Using this basepoint-free homotopy theory we give a polynomial time algorithm to determine whether or not there is a topologically valid walk $W_r$ for given $\phi$ and $\psi$.  This is the main technical part of the paper.


In Section \ref{sect:tight} we define tight closed walks, and show that given a topologically valid walk $W_r$ one can determine in polynomial time if there are any tight closed walks that obstruct a $(\phi,\psi)$-reconfiguration that uses $W_r$.  

All these ideas are brought together in Section \ref{sect:proof} to give the formal proof of Theorem \ref{thm:mainT}. This is more descriptive version of Theorem \ref{thm:main} and yields it as an immediate corollary.

  

 
 \section{Discrete analogue of the fundamental group}\label{sect:top}

For this section, and all following sections, $H$ is a triangle-free reflexive graph.   All algorithms are about a given instance $(G, \phi, \psi)$ of $\recon(H)$, and running times are given in terms of $|V(G)|$.

 \subsection{The fundamental group of a reflexive graph}

 For any integer $\l \geq 1$ let $P_\l$ be the reflexive path on $\l+1$ vertices. 
 We refer to elements of $\bHom(P_\l,H)$ as {\em walks} of $H$, and represent them as
 $(x_0 x_1 \dots x_\l)$. If $x_0 = a$ and $x_\l = b$ the walk is an {\em  $(a,b)$-walk}.
 
 \begin{definition}\label{def:moves0}
 Let $\Pi(H;a,b)$ be the graph whose vertices are $(a,b)$-walks in $H$, and in which a walk $Y$ is adjacent to a walk $X = (x_0, \dots, x_\ell)$ if either of the following are true:
  \begin{enumerate}
  \item[\mylabel{P1}{(P1)}]
    $Y = (x_0,x_1, \dots, x_{i-1},x_i,x_i,x_{i+1},\dots, x_\l)$ for any $i$, or
  \item[\mylabel{P2t}{(P2)}]
    $Y = (x_0,x_1, \dots, x_{i-1},x'_i,x_{i+1}, \dots, x_\l)$ for some $i \notin \{0,\ell\}$, where $x_i' \sim x_i$.
  \end{enumerate}
   Of course, $Y$ is also adjacent to $X$ if $X$ is adjacent to $Y$ via this definition.   For an $(a,b)$-walk $X$, let $[X]$ be the component of $\Pi(H; a,b)$ containing $X$.    
   \end{definition}
 
  \begin{remark}
    As suggested in the previous section, we will build from this a version of $\bHom(C,H)$ in which edges of $C$ can be subdivided. 
    Property \ref{P1} allows  subdivision.  Property \ref{P2t} mimics adjacency in $\bHom(C,H)$; here we use the fact mentioned in the introduction that even in $\bHom(C,H)$ one need only change one colour at a time. 
  \end{remark}

   The {\em concatenation operation}  builds a $(a,c)$-walk $X_1 \con X_2$ from an $(a,b)$-walk $X_1$ and a $(b,c)$-walk $X_2$ by identifying the last vertex of $X_1$ with the first vertex of $X_2$.  The {\em reversal operation}  changes an $(a,b)$-walk $X_1$ to a $(b,a)$-walk $\rev{X_1}$ by reversing the order of the vertices.  
   Though concatentation is only a partial operation on the set of walks in $H$,  it is easy to check that both operations pass to $\Pi(H;a,b)$ in the following sense: for $X_1 \in  \Pi(H;a,b)$ and $X_2 \in \Pi(H;b,c)$, if   $Y_1 \in \ec{X_1}$ and $Y_2 \in \ec{X_2}$ then
   $\rev{Y_1} \in \ec{\rev{X_1}}$ and $Y_1 \con Y_2 \in \ec{X_1 \con X_2}$; so we can apply them to components of $\Pi(H; a,b)$.  Moreover $\ec{X_1 \con \rev{X_1}}$ contains the empty $(a,a)$ walk $0_a$ (or just $0$ when $a$ is understood) at $a$, which acts as an identity with respect to concatenation. Thus restricting to closed walks with a fixed first vertex, or {\em basepoint}, $r$,  it is easy to see then that concatenation is a group operation on the components of $\Pi(H;r,r)$.    We thus define the following `fundamental group' of a graph $H$.

\begin{definition}\label{def:fundgroup}
   The {\em fundamental group} of a reflexive  graph $H$, with basepoint $r$, is the set $\pi(H;r)$ of components of $\Pi(H;r,r)$ under the concatenation operation.
 \end{definition}

\subsection{The fundamental group of a triangle-free graph}

  The above construction and definition is agrees with a more general construction in  \cite{LT04}, where the authors go on to show that $\pi(H;a)$ is isomorphic to the fundamental group of the clique complex of $H$.  Without formalizing this, we simply view $\pi(H,r)$ as an analogue of the homotopy group of a space. In light of this analogy, if $\ec{C} = \ec{C'}$ we say that $C$ and $C'$ are {\em homotopic}, and if $[C] = 0$ we call it {\em contractible}.   Under this analogy, a triangle-free graph $H$ corresponds to a purley one dimensional space---a $1$-manifold, so several things become simpler.  In fact, the simplification begins with the observation that in property \ref{P2t} of Defintion \ref{def:moves0}, such $x'_i$ can exist in a triangle-free graph $H$ only if $x_{i-1} = x_{i+1}$ and this is either $x_i$ or $x'_i$.  It is not hard to see then that  the following version of Definition \ref{def:moves0} yields an $\Pi(H;a,b)$ with the same components, and so yields the same $\pi(H;r)$.       

\begin{definition}\label{def:moves1}
 Let $\Pi(H;a,b)$ be the graph whose vertices are $(a,b)$-walks in $H$, and in which a walk $Y$ is adjacent to a walk $X = (x_0, \dots, x_\ell)$ if either of the following are true:
  \begin{enumerate}
  \item[{(P1)}]
    $Y = (x_0,x_1, \dots, x_{i-1},x_i,x_i,x_{i+1},\dots, x_\l)$ for any $i$, or
  \item[\mylabel{P2}{(P2')}]
    $Y = (x_0,x_1, \dots, x_{i-1},x'_i,x_{i+1}, \dots, x_\l)$ for some $i \notin \{0,\ell\}$, where $x_{i-1}=x'_i = x_{i+1}$.
  \end{enumerate}
  For an $(a,b)$-walk $X$, let $[X]$ be the component of $\Pi(H; a,b)$ containing $X$.    
  \end{definition}

 A closed walk with a basepoint other then $r$ can be viewed as a closed walk with basepoint $r$ via what is known as a basepoint change.   For an $(r,b)$-walk $X$ and a closed walk $C$ in based at $b$ define the closed walk
 $\beta_X(C) := X \con C \con \rev{X}$. It is standard, and easily shown, that the map $\pi(H;b) \to \pi(H;r): \ec{C} \mapsto \ec{\beta_X(C)}$ is a group isomorphism.
 We observed that if $C$ is a closed walk with basepoint $r$, then a $(\phi,\psi)$-path $W$ in $\bHom(C,H)$ induces a $(\phi(r),\psi(r))$-walk $W_r$ in $\bHom(r,H)$.  In fact, it induces a homotopy between $\phi(C)$ and the cycle $\psi(C)$ with basepoint changed
 by  the trace $W_r$ of $r$.  This gives us our basic topological obstruction to a $(\phi,\psi)$-reconfiguration.  The following would hold for non triangle-free $H$ as well, but the proof would be a bit longer.

  \begin{lemma}\label{lem:preswind}
     For triangle-free $H$, if there is a $(\phi,\psi)$-path $W$ in $\bHom(C,H)$ for some cycle $C$ with basepoint $r$, then $\ec{\psi(C)} = \ec{\beta_{W_r}(\phi(C))}$ in $\pi(H;\phi(r))$.
  \end{lemma}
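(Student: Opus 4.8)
The plan is to induct on the length $d$ of the $(\phi,\psi)$-path $W = \phi_1 \adj \cdots \adj \phi_d$ in $\bHom(C,H)$, with $\phi_1 = \phi|_C$ and $\phi_d = \psi|_C$. For $d=1$ there is nothing to prove since $W_r$ is the trivial walk $0_{\phi(r)}$ and $\beta_{0}(\phi(C)) = \phi(C)$ up to \ref{P1}-moves (padding with repeated vertices), so $\ec{\psi(C)} = \ec{\beta_{W_r}(\phi(C))}$ trivially. For the inductive step, it suffices to treat a single edge $\phi_i \adj \phi_{i+1}$ of $\bHom(C,H)$: if we can show that $\ec{\phi_{i+1}(C)} = \ec{\beta_{E_i}(\phi_i(C))}$ in $\pi(H;\phi_i(r))$, where $E_i$ is the one-step trace $(\phi_i(r)\,\phi_{i+1}(r))$ of $r$, then concatenating these basepoint changes along $W$ and using the fact that $\beta$ is compatible with concatenation (i.e. $\beta_{X_1 \con X_2} = \beta_{X_1} \circ \beta_{X_2}$ on homotopy classes, which follows from the associativity of $\con$ and $X_1 \con X_2 \con \rev{X_2} \con \rev{X_1} \in \ec{0}$) gives the claim.

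So the crux is the one-edge case. Fix an edge $\phi_i \adj \phi_{i+1}$ of $\bHom(C,H)$ and write $C = (r = c_0, c_1, \dots, c_n = r)$. By the observation recorded just before Lemma~\ref{lem:preswind} (and the introduction's remark that one may change colours one vertex at a time along an edge of $\bHom(C,H)$), I may assume $\phi_i$ and $\phi_{i+1}$ differ on exactly one vertex $c_j$ of $C$, say $\phi_i(c_j) = x$ and $\phi_{i+1}(c_j) = x'$ with $x \adj x'$. The edge condition in $\bHom(C,H)$ forces $\phi_i(c_{j-1}) \adj x'$ and $\phi_i(c_{j+1}) \adj x'$ (as well as the already-known $\phi_i(c_{j-1}) \adj x$, $\phi_i(c_{j+1}) \adj x$). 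Since $H$ is triangle-free, each of the pairs $\{\phi_i(c_{j-1}), x, x'\}$ and $\{\phi_i(c_{j+1}), x, x'\}$ cannot be a triangle, so $\phi_i(c_{j-1}) \in \{x, x'\}$ and $\phi_i(c_{j+1}) \in \{x, x'\}$. One then checks the handful of cases: if $c_j \ne r$, the walk $\phi_{i+1}(C)$ is obtained from $\phi_i(C)$ by a single move of type \ref{P2} (when $\phi_i(c_{j-1}) = \phi_i(c_{j+1}) = x'$) or by a \ref{P1}-move followed by its inverse combined with the local picture (when one neighbour equals $x$, the relevant sub-walk $\dots x x x' \dots$ or $\dots x' x x \dots$ is \ref{P1}-equivalent to $\dots x x' \dots$, etc.), so $\ec{\phi_{i+1}(C)} = \ec{\phi_i(C)}$; and here the trace $E_i$ is trivial since $\phi_i(r) = \phi_{i+1}(r)$, matching $\beta_{0}$. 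If $c_j = r$, then $x = \phi_i(r)$, $x' = \phi_{i+1}(r)$, the edge condition gives $\phi_i(c_1), \phi_i(c_{n-1}) \in \{x,x'\}$, and a direct computation shows $\beta_{E_i}(\phi_i(C)) = (x' \, x \, x \, c_1 \cdots c_{n-1} \, x \, x \, x')$ (where $c_1, \dots, c_{n-1}$ here abbreviate $\phi_i(c_1), \dots, \phi_i(c_{n-1})$) reduces by \ref{P1}- and \ref{P2}-moves to $\phi_{i+1}(C) = (x' \, \phi_i(c_1) \cdots \phi_i(c_{n-1}) \, x')$, again using $\phi_i(c_1), \phi_i(c_{n-1}) \in \{x, x'\}$ to absorb the boundary.

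I expect the main obstacle to be the bookkeeping in this one-edge case — specifically, verifying in each of the (few) triangle-free configurations that the local modification of the closed walk is realized by a sequence of \ref{P1}- and \ref{P2}-moves, and in the basepoint case $c_j = r$ correctly matching the definition $\beta_{E_i}(\phi_i(C)) = E_i \con \phi_i(C) \con \rev{E_i}$ against $\phi_{i+1}(C)$. A secondary, more routine point is confirming that $\beta$ descends to a well-defined action on homotopy classes that is functorial under concatenation of traces; this is exactly the standard basepoint-change computation already cited as ``standard, and easily shown'' in the paragraph preceding the lemma, so I would simply invoke it. Everything else is formal.
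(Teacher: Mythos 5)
Your proposal is correct and takes essentially the same route as the paper: reduce by induction to a single-vertex change, split into the basepoint vs.\ non-basepoint cases, and use triangle-freeness of $H$ (together with reflexivity of $C$, which forces $x\adj x'$) to conclude that the neighbouring images lie in $\{x,x'\}$, after which the walks are identified by \ref{P1}- and \ref{P2}-moves. You are somewhat more explicit than the paper about the functoriality of $\beta$ under concatenation and the exact mix of \ref{P1}/\ref{P2} moves needed, but the underlying argument is the same.
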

  \begin{proof}
    Recall that if there is a path from $\phi$ to $\psi$ in $\bHom(C,H)$, then there is a path in which any two consecutive elements differ on one vertex. By induction on the length of this path, it suffices to consider the case  that $\phi$ and $\psi$ differ on only one vertex.

    If they differ on any vertex but their basepoint, then there is nothing to prove, as then $\ec{\psi(C)} = \ec{\phi(C)}$ by \ref{P2}, and so we are done with empty $W_r$.  So we may assume that they differ on the first vertex. That is, we have
    \[ \psi(C) = (x_0, x_1, \dots x_{\l}, x_0) \mbox{ and }
       \phi(C) = (x'_0, x_1, \dots x_{\l}, x'_0).     \]
    In this case take $W_r = (x_0,x'_0)$, we will show that  
    \[ \ec{\psi(C)} = \ec{(x_0,x_1, \dots, x_{\l},x_0)}  = \ec{(x_0,x'_0, x_1, \dots, x_{\l}, x'_0,x_0)} = \ec{\beta_{W_r}(\phi(C))}.   \]
    
    Well, $x_0, x'_0$ are adjacent (as $C$ is reflexive) and are both adjacent to $x_1$ and $x_{\ell-1}$. So, as $H$ is triangle-free, we have that $x_0' = x_1$ or $x_0 = x_1$.  In the first case we have 
     \[ [(x_0,x_1, \dots, x_{\l},x_0)] =  [(x_0,x'_0 x_1, \dots, x_{\l}, x_0)] \]
    by \ref{P1} and in the second case we have it by \ref{P2}.
    Similarly $x'_0 = x_\ell$ or $x_0 = x_\ell$, and we get the second equality that we need to finish the lemma by transitivity.
  \end{proof}

\subsection{Computation of reduced cycles}

 We say that a walk $X$ in $\Pi(H;r)$ is {\em reduced} if it is a shortest walk in $\ec{X}$.
 
 \begin{lemma}\label{lem:redunique}
  Let $H$ be a triangle-free graph. Any class $\ec{X}$ in $\pi(H;r)$ has a unique reduced walk, and it can be found in linear time. 
 \end{lemma}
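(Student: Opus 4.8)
The plan is to realise homotopy of walks by a \emph{terminating} rewriting system whose irreducible walks are exactly the shortest walks in their class, and to deduce uniqueness from \emph{confluence}. Call a walk $(x_0,\dots,x_\ell)$ \emph{tidy} if $x_i\neq x_{i+1}$ for every $i$ and $x_{i-1}\neq x_{i+1}$ for every $0<i<\ell$, and introduce two reduction moves: (R1) delete one vertex from a pair of equal consecutive vertices; (R2) if $x_{i-1}=x_{i+1}$ for some $0<i<\ell$, replace the subwalk $x_{i-1}\,x_i\,x_{i+1}$ by the single vertex $x_{i-1}$. Each move strictly decreases the length, so iterating them terminates, and a walk admits no move exactly when it is tidy. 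Each move is realised within $\Pi(H;a,b)$ by moves of Definition~\ref{def:moves1}: (R1) is \ref{P1} reversed, and (R2) is \ref{P2} (turning $x_{i-1}\,x_i\,x_{i+1}$ into $x_{i-1}\,x_{i-1}\,x_{i-1}$) followed by two reverse applications of \ref{P1}; hence a walk and each of its reducts lie in the same component. Conversely, \ref{P1} merely inserts a stall, which (R1) removes, and the two walks related by a single use of \ref{P2} have a common reduct; so any two walks adjacent in $\Pi(H;a,b)$ have a common reduct.

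The heart of the argument is \emph{local confluence}: if a walk rewrites in one step to $W_1$ and also to $W_2$, then $W_1$ and $W_2$ admit a common reduct. This is verified by a finite case analysis according to how the at most five consecutive vertices involved in the two moves overlap. Since the system is terminating, local confluence implies confluence (Newman's Lemma), so every walk $X$ has a \emph{unique} irreducible reduct $\bar X$. In particular two walks with a common reduct have the same irreducible reduct, so (by the last sentence of the previous paragraph, applied along a path) walks in the same component of $\Pi(H;a,b)$ have the same irreducible reduct; conversely $X$ reduces to $\bar X$ through moves of Definition~\ref{def:moves1}, so $X$ and $\bar X$ are always in the same component. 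Hence $[X]=[Y]$ if and only if $\bar X=\bar Y$.

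It remains to match $\bar X$ with the reduced walk of $[X]$. Since $\bar X\in[X]$ is tidy while every non-tidy walk admits a move and so is not of minimum length in its component, $\bar X$ is a shortest walk in $[X]$, that is, it is reduced; and if $Z$ is any reduced walk in $[X]$ then $Z$ is tidy, hence irreducible, hence $Z=\bar Z=\bar X$. Therefore $[X]$ has a unique reduced walk, namely $\bar X$. Finally, $\bar X$ is computed by a single left-to-right pass through $X$, maintaining a stack that always holds a tidy walk ending at the last vertex read: to process the next vertex $x$, do nothing if the top of the stack equals $x$, pop the top if instead the entry just below the top equals $x$, and otherwise push $x$. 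Each vertex of $X$ is pushed and popped at most once, so this runs in linear time.

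The one genuinely technical point is the local confluence case check; it is short, since the two moves involve at most five consecutive vertices, but it does require going through the overlap patterns. Alternatively one can bypass it by contracting each maximal run of equal consecutive vertices of a walk to a single vertex: this turns walks of $H$ into edge-paths of the underlying simple graph of $H$ and turns \ref{P1} and \ref{P2} into insertion and deletion of backtracks, so the uniqueness of reduced walks reduces to the classical fact that reduced edge-paths in a graph are unique.
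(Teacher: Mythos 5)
Your proof is correct, but it takes a genuinely different route from the paper's. The paper lifts walks to the universal cover $\cU$ of $H$: because $\cU$ is a tree, every class of walks in $\cU$ trivially has a unique shortest representative, and the key work is showing that the locally bijective covering map $\phi\colon\cU\to H$ transports components of $\Pi(\cU;(r),B)$ isomorphically onto components of $\Pi(H;r,b)$. You instead set up a terminating rewriting system on walks (deleting stalls and backtracks), observe that the rewrite steps generate exactly the equivalence given by \ref{P1}/\ref{P2}, and deduce uniqueness of the normal form from local confluence via Newman's Lemma; the stack-based one-pass algorithm then gives the linear-time claim very explicitly. Both are standard ways to prove uniqueness of reduced words. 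The paper's covering-space argument is more conceptual and fits the ``discrete homotopy'' framing it is developing, while your rewriting argument is more elementary and self-contained and makes the linear-time algorithm transparent (the paper's ``apply moves greedily'' remark is really the same algorithm, justified a posteriori by uniqueness). Your closing remark---contract maximal runs of equal consecutive vertices so the problem becomes uniqueness of reduced edge-paths in the underlying simple graph---is a clean way to outsource the confluence check, and is in fact morally the same reduction the paper makes when it observes that vertices of $\cU$ are exactly the tidy walks. One thing worth saying explicitly: triangle-freeness enters only through the use of the simplified move \ref{P2} from Definition~\ref{def:moves1}; for general $H$ the original move \ref{P2t} is not captured by backtrack removal, and neither your rewriting system nor the paper's tree $\cU$ would be correct.
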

 \begin{proof}
   We prove the result more generally for walks in $\Pi(H;a,b)$ using the topological notion of a covering space---the idea is to define a graph $\cU$ for which the result is trivial, and to show that walks in $H$ `lift' uniqely to walks in $\cU$ in such a way that components of $\Pi$ are preserved.    

 For a graph $H$ and a vertex $r$, the {\em universal cover} $\cU$ is the infinite graph whose vertices are walks in $H$ starting at $r$, and having no occurrences of the same vertex at distance $1$ or $2$.  Two walks $X$ and $Y$ are adjacent in $\cU$ if we can get one from the other by dropping the last vertex. See Figure~\ref{fig:UC} for an example.
   
   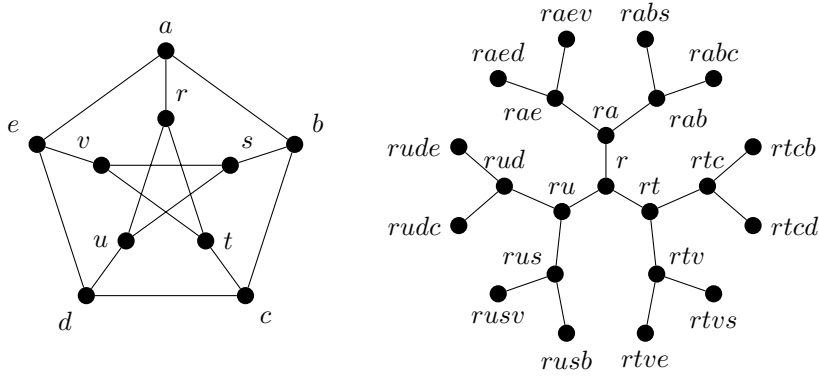
\begin{figure}[htbp]
\begin{center}
\begin{tikzpicture}[scale=0.9]
   \newdimen\P
   \P=2cm
   \newdimen\smallP
   \smallP=1cm
   \newdimen\Lone
   \Lone=0.75cm
   \newdimen\Ltwo
   \Ltwo=1.5cm
   \newdimen\Lthree
   \Lthree=2.25cm

      \draw
      (90:\P) node [vert,label={[label distance=0pt]90:{$a$}}] (v1){}
      (162:\P) node [vert,label={[label distance=0pt]162:{$e$}}] (v2){}
      (234:\P) node [vert,label={[label distance=0pt]234:{$d$}}] (v3){}
      (306:\P) node [vert,label={[label distance=0pt]306:{$c$}}] (v4){}
      (378:\P) node [vert,label={[label distance=0pt]378:{$b$}}] (v5){}
(v1)--(v2)--(v3)--(v4)--(v5)--(v1);

      \draw
      (90:\smallP) node [vert,label={[label distance=0pt]85:{$r$}}] (u1){}
      (234:\smallP) node [vert,label={[label distance=0pt]180:{$u$}}] (u2){}
      (378:\smallP) node [vert,label={[label distance=0pt]80:{$s$}}] (u3){}
      (522:\smallP) node [vert,label={[label distance=0pt]100:{$v$}}] (u4){}
      (666:\smallP) node [vert,label={[label distance=0pt]360:{$t$}}] (u5){}
(u1)--(u2)--(u3)--(u4)--(u5)--(u1);

\draw(u1)--(v1);
\draw(u2)--(v3);
\draw(u3)--(v5);
\draw(u4)--(v2);
\draw(u5)--(v4);

  \begin{scope}[shift={(6.5,0)}]
  \draw(0,0) node [vert,label={[label distance=0pt]85:{$r$}}] (r){}
  (90:\Lone) node [vert,label={[label distance=0pt]90:{$ra$}}] (ra){}
  (210:\Lone) node [vert,label={[label distance=0pt]90:{$ru$}}] (ru){}
  (330:\Lone) node [vert,label={[label distance=0pt]90:{$rt$}}] (rt){}
  
  (60:\Ltwo) node [vert,label={[label distance=-2pt]350:{$rab$}}] (rab){}
  (120:\Ltwo) node [vert,label={[label distance=-2pt]190:{$rae$}}] (rae){}
  (180:\Ltwo) node [vert,label={[label distance=0pt]90:{$rud$}}] (rud){}
  (240:\Ltwo) node [vert,label={[label distance=-2pt]170:{$rus$}}] (rus){}
  (300:\Ltwo) node [vert,label={[label distance=-2pt]10:{$rtv$}}] (rtv){}
  (360:\Ltwo) node [vert,label={[label distance=0pt]90:{$rtc$}}] (rtc){}
  
  (45:\Lthree) node [vert,label={[label distance=0pt]90:{$rabc$}}] (rabc){}
  (75:\Lthree) node [vert,label={[label distance=0pt]90:{$rabs$}}] (rabs){}
  (105:\Lthree) node [vert,label={[label distance=0pt]90:{$raev$}}] (raev){}
  (135:\Lthree) node [vert,label={[label distance=0pt]90:{$raed$}}] (raed){}
  (165:\Lthree) node [vert,label={[label distance=0pt]180:{$rude$}}] (rude){}
  (195:\Lthree) node [vert,label={[label distance=0pt]180:{$rudc$}}] (rudc){}
  (225:\Lthree) node [vert,label={[label distance=0pt]270:{$rusv$}}] (rusv){}
  (255:\Lthree) node [vert,label={[label distance=0pt]270:{$rusb$}}] (rusb){}
  (285:\Lthree) node [vert,label={[label distance=0pt]270:{$rtve$}}] (rtve){}
  (315:\Lthree) node [vert,label={[label distance=0pt]270:{$rtvs$}}] (rtvs){}
  (345:\Lthree) node [vert,label={[label distance=0pt]0:{$rtcd$}}] (rtcd){}
  (15:\Lthree) node [vert,label={[label distance=0pt]0:{$rtcb$}}] (rtcb){}
 ;
 \draw
 (r)--(ra)
 (r)--(ru)
 (r)--(rt)
 
 (ra)--(rab)
 (ra)--(rae)
 (ru)--(rud)
 (ru)--(rus)
 (rt)--(rtc)
 (rt)--(rtv)
 
 (rab)--(rabs)
 (rab)--(rabc)
 (rae)--(raed)
 (rae)--(raev)
 (rud)--(rude)
 (rud)--(rudc)
 (rus)--(rusb)
 (rus)--(rusv)
 (rtc)--(rtcb)
 (rtc)--(rtcd)
 (rtv)--(rtve)
 (rtv)--(rtvs)
 ;
  \end{scope}
\end{tikzpicture}
\end{center}
\caption{The Petersen graph and a finite portion of its universal cover.}
\label{fig:UC}
\end{figure}
   
   Because $\cU$ is a tree, it is easy to see that $\Pi(\cU; (r),(r,x_1,x_2, \dots, x_\ell))$ is connected for any vertex $(r,x_1,x_2, \dots, x_\ell)$ and that its unique shortest walk is
   \[ (r)(r,x_1)(r,x_1,x_2)  \dots (r,x_1,x_2 \dots, x_\ell). \]
   Now, the map
    \[ \phi: \cU \to H: (r,x_1, \dots, x_\ell) \mapsto x_\ell \]
   is a locally bijective homomorphism; i.e., induces a bijection between closed neighbourhoods $\{x\} \cup N(x)$ of vertices in $\cU$ and vertices in $H$. 
 It follows that the map 
    \[ \Phi: (r,x_1, \dots, x_\ell) \mapsto
        (\phi(r),\phi(r,x_1), \dots, \phi(r,x_1, \dots, x_\ell)) \] 
    is a bijective homomorphism  between the set $\Pi(\cU; (r))$ of walks in $\cU$ starting at $(r)$ and the set $\Pi(H; r)$ of walks in $H$ starting at $r$. 
    Indeed the {\em lift} $\Phi^{-1}(X)$ of a walk $X = (rx_1 \dots x_n)$ of $H$ from $\Pi(H;r)$ to $\Pi(\cU;(r))$ can be given explicitly: its first vertex is $\Phi^{-1}(X)_0 := (r)$ and its $i^{th}$ vertex is  \[ \Phi^{-1}(X)_i :=   \Phi^{-1}(X)_{i-1} \con \phi_{x_{i-1}}^{-1}(x_i) \]     where $\phi^{-1}_x$ is the bijective restriction of $\phi^{-1}$ to the closed neighbourhood of $x$.  Observe that for $x_{i-1} \sim x_i$ we have  $\phi^{-1}_{x_{i-1}}(x_i) = \phi^{-1}_{x_i}(x_i)$. 
    
   
  Now, not all $(r,b)$-walks of $H$ must lift to walks of $\cU$ with the same endpoint, but those in a component of $\Pi(H;r,b)$ should; in fact, we show now that for triangle-free $H$, a component of $\Pi(H; r,b)$ lifts isomorphically to  $\Pi(\cU;(r),B)$
  for some walk $B$, by showing $\Phi^{-1}$ is a homomorphism.   Indeed let $X$ and $X'$ be adjacent in $\Pi(H;r,b)$.  
   If $X$ and $X'$ are related by \ref{P1}, then we may assume  $X = (r,v_1, \dots, v_n)$ while $X' = (r,v_1, \dots, v_{i-1},v_i,v_i,v_{i+1}, \dots, v_n)$.  These lift to $W \con U$ and $W \con \phi^{-1}_{v_i}(v_i) \con U$ respectively where
  $W = \Phi^{-1}(r,v_1, \dots, v_i)$ as $\phi^{-1}_{v_i}(v_i)$ is equal to the last vertex $\phi^{-1}_{v_{i-1}}(v_i)$ of $W$.  But as these vertices are equal $W\con U$ and $W \con \phi^{-1}_{v_i}(v_i) \con U$  are adjacent in $\Pi(H;(r),B)$ by \ref{P1}.  
     Similar arguments about \ref{P2} complete the proof that adjacent walks in  $H$ lift to adjacent walks in $\cU$.  Thus $\Phi^{-1}$ maps a component of $\Pi(H;r,b)$ isomorphically to $\Pi(\cU,(r), B)$ for some $B$. 
     Clearly  $\Phi$ takes the shortest walk  in $\Pi(\cU;(r),B)$ to a unique shortest wak in $\Pi(H;r,b)$, so $\Pi(H;r,b)$ has a unique shortest walk. 
 
    One can lift to $\cU$ to easily find the unique shortest walk, but the fact that it is unique means that one can simply apply \ref{P1} or \ref{P2} to greedily shorten a walk, and so find 
    its reduced form in linear time. 
    
 \end{proof}



 \subsection{Computational Tools}

  We finish this section by observing some simple computational tools that are familiar from homotopy theory which we will use several times.  
  It is easy to show, for $(a,b)$-walks $X_1$ and $X_2$,  that
  \begin{equation}\label{eq:homwalks}
   \ec{X_1} = \ec{X_2} \iff \ec{ X_1 \con \rev{X_2}} = 0. 
 \end{equation}

  Taking $I$ as some initial segment of a closed walk $C$, it is not hard to see that 
  the {\em cyclic shift} $\sigma_I(C)$ of $C$ by $|I|$ vertices is in $\ec{\beta_{\rev{I}}(C)}$,
  and so 
  \begin{equation}\label{eq:perm}
    \ec{C} = [0] \iff \ec{\sigma_I(C)} = [0].
  \end{equation}  
  Thus $\ec{C}$ is contractible if and only if all cyclic shifts of it are contractible. Breaking a cycle up as the concatenation of walks, this yield identities such as the following:  
  
   \begin{equation}\label{eq:perm2}
   \ec{ X_1 \con X_2 \con X_3 \con X_4 } = [0] \iff \ec{ X_2 \con X_3 \con X_4 \con X_1 } = [0] 
   \end{equation}
 or using \eqref{eq:homwalks}: 
 \begin{equation}\label{eq:permedge}
      \ec{\rev{X_1}} = \ec{X_2 \con X_3 \con X_4} \iff \ec{\rev{X_2}} = \ec{X_3 \con X_4 \con X_1}.
    \end{equation}
    
  There is one more simple observation which allows us to apply these notions succinctly.

\begin{definition}
  When $C_1 = U_1 \con X \con V_1$ and $C_2 = U_2 \con \rev{X} \con V_2$ are closed walks
  as in Figure \ref{fig:sum},  let $C_1 \con_X C_2  = U_1 \con V_2 \con U_2 \con V_1$.
\end{definition}

\begin{figure}
\begin{center}
\begin{tikzpicture}
   \newdimen\R
   \R=1.65cm
   \newdimen\smallR
   \smallR=1.0cm

   \draw (0,0) node [vert] (v1){};
   \draw (0,1.5) node [smallvert] (v2){};
   \draw (0,3) node [vert] (v3){};
   \draw (5,3) node [empty] (v4){};
   \draw (5,1.5) node [smallvert] (v5){};
   \draw (5,0) node [empty] (v6){};

   \draw (v1) -- (v3) -- (v4.center) -- (5,0) -- (v1); 
   \draw [dashed] (v2)--(v5); 

   \node at (-.4,.6) {$U_1$};  \draw[-stealth] (-.1, .5) -- (-.1,.8);
   \node at (-.4,2.1) {$V_2$}; \draw[-stealth] (-.1, 2) -- (-.1, 2.3);
   \node at (5.2,3.3) {$U_2$}; \draw[-stealth] (4.8,3.1) -- (5.1,3.1) -- (5.1 , 2.8);
   \node at (2.5,1.8) {$X$};   \draw[-stealth] (2.4,1.6) -- (2.7,1.6);
   \node at (5.2,-.3) {$V_1$}; \draw[-stealth] (5.1,.2) -- (5.1,-.1) -- (4.8 ,-.1);
   

  \node at (1.4,2.2) {$C_2$};
  
  \node at (1.4,.7) {$C_1$};
  
  
\end{tikzpicture}
\end{center}
\caption{The cycle $C = C_1 \con_X C_2$ where $C_1 = U_1 \con X \con V_1$
      and $C_2 = U_2 \con \rev{X} \con V_2$}
\label{fig:sum}
\end{figure}
  
 With this definition, we have the following. 
 
\begin{lemma}\label{lem:sum}
  Let $C = C_1 \con_X C_2$. If two of $\ec{C_1}, \ec{C_2}$ and $\ec{C}$ are contractible, then they all are.
\end{lemma}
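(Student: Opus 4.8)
The plan is to reduce the statement to the identity~\eqref{eq:homwalks} together with the cyclic-shift and concatenation manipulations from~\eqref{eq:perm}--\eqref{eq:permedge}, so that the whole lemma becomes a formal computation in the group $\pi(H;r)$ after choosing a common basepoint. First I would pick a basepoint. Writing $C_1 = U_1 \con X \con V_1$ and $C_2 = U_2 \con \rev{X} \con V_2$, let $a$ be the first vertex of $X$ and $b$ its last vertex, let $P$ be the $(r,a)$-walk obtained by composing whatever walks are needed (one may simply take $r=a$ after a cyclic shift, using~\eqref{eq:perm}, since contractibility is invariant under cyclic shifts and under basepoint change by~\eqref{eq:perm2}). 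After such a normalization I may assume $C_1$ and $C_2$ are based at $a$ and that $C_1 = X \con V_1 \con U_1$, $C_2 = U_2 \con \rev{X} \con V_2$ with the shared sub-walk $X$ running from $a$ to $b$; the point of $\con_X$ is exactly that $C = C_1 \con_X C_2 = U_1 \con V_2 \con U_2 \con V_1$, which after another cyclic shift is homotopic to $(X\con V_1\con U_1)\con(X\con V_2\con U_2)^{-1}$ up to reversal — i.e. to $C_1 \con \rev{C_2'}$ for a cycle $C_2'$ homotopic to $C_2$.

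The key algebraic observation is that, in $\pi(H;a)$, one has $[C] = [C_1]\con[C_2]^{-1}$ (or $[C_1]\con[C_2]$, depending on orientation conventions), because the two copies of $X$ and $\rev{X}$ cancel: concretely, $C_1 \con_X C_2 = U_1\con V_2\con U_2\con V_1$ is homotopic, by inserting and deleting the contractible loop $X\con\rev X$ via repeated applications of \ref{P1} and \ref{P2'}, to $(U_1\con X\con V_1)\con(\rev{V_1}\con\rev X\con\rev{U_1})\con\cdots$; the cleanest route is to verify $[C_1]\con[\rev{C_2}] = [C_1 \con_X C_2]$ directly, i.e. that $[U_1\con X\con V_1] \con [\rev{V_2}\con X\con\rev{U_2}] = [U_1\con V_2 \con U_2\con V_1]$ after cyclic shifts, which follows because $X\con\rev X$ contracts. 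Granting this identity, the lemma is immediate: in any group, if two of $g$, $h$, $gh^{-1}$ equal the identity then so does the third.

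The main obstacle will be bookkeeping the basepoints and orientations carefully enough that the cancellation of $X$ against $\rev{X}$ is a genuine application of~\eqref{eq:homwalks} and~\eqref{eq:perm2} rather than hand-waving: one must make sure that when we cyclically shift $C_1$, $C_2$ and $C$ to a common basepoint, the shared segment $X$ occupies a position in each where the group multiplication literally concatenates it with its reverse. I would handle this by first proving the purely formal identity $[C_1 \con_X C_2] = [C_1']\con[\rev{C_2'}]$ in $\pi(H;a)$ for suitable representatives $C_1' \in [C_1]$, $C_2'\in[C_2]$ obtained by cyclic shift (legitimate by~\eqref{eq:perm}, which says contractibility is shift-invariant, and by the isomorphism $\pi(H;b)\cong\pi(H;a)$ under basepoint change), and only then invoking the group-theoretic triviality. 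Everything else — the two copies of $X$ killing each other, and "two of three trivial implies the third trivial" — is routine once the setup is pinned down. I would keep the write-up short, citing~\eqref{eq:homwalks}, \eqref{eq:perm}, \eqref{eq:perm2} and Definition of $\con_X$, and not belabour the diagram chase.
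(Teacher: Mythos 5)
Your approach is essentially the same as the paper's: shift $C_1$ and $C_2$ cyclically so that $X$ and $\rev{X}$ are adjacent under concatenation and cancel, showing that $[C]$ equals (a shift of) $[C_1]\con[C_2]$, then invoke the group structure and the fact that contractibility is invariant under cyclic shift (\eqref{eq:perm}). Note only that the displayed identity $[U_1\con X\con V_1]\con[\rev{V_2}\con X\con\rev{U_2}]=[U_1\con V_2\con U_2\con V_1]$ does not typecheck as written since the two classes have different basepoints; the shifted representatives $V_1\con U_1\con X$ and $\rev X\con V_2\con U_2$ (as the paper uses) are what make the concatenation and cancellation literal.
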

\begin{proof}
  Taking shifts $C'_1 = V_1 \con U_1 \con X$ of $C_1$ and $C'_2 = \rev{X} \con V_2 \con U_1$ we have
  that $\ec{C'_1 \con C'_2} = \ec{C'}$ where $C'$ is the shift $V_1 \con U_1 \con V_2 \con U_1$
  of $C = C_1 \con_X C_2$.  It is easy to see therefore that two of $\ec{C'_1}, \ec{C'_2}$ and $\ec{C'}$ are contractible if and only if the third of them is.  The result follows by \eqref{eq:perm}.
\end{proof}

\section{Topologically valid system of walks}\label{sect:TV}

  From Lemma \ref{lem:preswind} we see a reconfiguration of $\phi$ to $\psi$ in $\bHom(G,H)$ induces for each cycle $C$ of $G$ fixed-basepoint  homotopy from $\phi(C)$ to $\beta_{W_r}(\psi(C))$ with a common basepoint change $W_r$.   
Not only is such a walk  $W_r$  necessary for a reconfiguration it is almost sufficient.  If it exists, the only obstructions a reconfiguration will be non-topological (tight closed walks).  So recognizing the existence of such a
basepoint change is key.  This task is simplified by the fact that choice of the basepoint $r$ is unimportant.    We address this in this section. 

  \begin{definition}\label{def:TV}
   A {\em system of walks for $\phi$ and $\psi$} is a vector $(W_v):= (W_v)_{v \in V(G)}$ of walks in $H$ such that $W_v$ is a $(\phi(v),\psi(v))$-walk.  A walk $W_v$ is  {\em topologically valid} if for  every  closed walk
  $C$ of $G$ with basepoint $v$ we have 
    \begin{equation}\label{eq:TV}
      \ec{\phi(C)} = \ec{\beta_{W_v}(\psi(C))}.
    \end{equation}
  The system of walks is {\em topologically valid} if all walks in it are topologically valid.  
  \end{definition}

  As suggested above, we get a topologically valid sytem of walks from a reconfiguration. 

  \begin{fact}\label{fact:necTV}
    Let $W$ be a reconfiguration from $\phi$ to $\psi$ in $\bHom(G,H)$. Where for every vertex $v$ of $G$,  $W_v$ is the trace of $v$, the system $(W_v)$ of walks is topologically valid for $\phi$ and $\psi$. 
  \end{fact}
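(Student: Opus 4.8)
The statement is essentially an immediate application of Lemma~\ref{lem:preswind}, so the proof will be short. The plan is as follows. Fix a reconfiguration $W = \phi_1 \sim \dots \sim \phi_d$ in $\bHom(G,H)$ with $\phi_1 = \phi$ and $\phi_d = \psi$, and for each vertex $v$ let $W_v = \phi_1(v) \sim \dots \sim \phi_d(v)$ be its trace. First I would observe that $W_v$ is indeed a $(\phi(v),\psi(v))$-walk in $H$: consecutive colourings differ on at most one vertex, so $\phi_i(v)$ and $\phi_{i+1}(v)$ are equal or adjacent in $H$ (using that $G$ is reflexive, so the loop at $v$ forces $\phi_i(v)\phi_{i+1}(v)$ to be an edge), and hence $(W_v)$ is a legitimate system of walks in the sense of Definition~\ref{def:TV}.

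Next, to check topological validity, fix any vertex $v$ and any closed walk $C$ of $G$ with basepoint $v$. Restricting every colouring $\phi_i$ to the vertices of $C$ yields a walk $W|_C = \phi_1|_C \sim \dots \sim \phi_d|_C$ in $\bHom(C,H)$ from $\phi|_C = \phi(C)$'s homomorphism to $\psi|_C$; here I should note that a closed walk $C$ of $G$ factors through a cycle (or, if $C$ is degenerate, the argument is trivial or reduces to a shorter closed walk), so Lemma~\ref{lem:preswind} applies with this $C$. Moreover the trace of the basepoint $v$ under $W|_C$ is exactly the walk $W_v$ defined above, since restriction to $C$ does not change the image of $v$. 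Lemma~\ref{lem:preswind} then gives $\ec{\psi(C)} = \ec{\beta_{W_v}(\phi(C))}$ in $\pi(H;\phi(v))$, which by applying $\beta_{\rev{W_v}}$ (equivalently, using that $\beta_{W_v}$ is a group isomorphism with inverse $\beta_{\rev{W_v}}$, and that $\beta_{\rev{W_v}}\beta_{W_v} = \mathrm{id}$) rearranges to $\ec{\phi(C)} = \ec{\beta_{W_v}(\psi(C))}$, which is exactly equation~\eqref{eq:TV}. Since $v$ and $C$ were arbitrary, every $W_v$ is topologically valid, and hence so is the system $(W_v)$.

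\textbf{Main obstacle.} There is no real difficulty here; the only points requiring a little care are bookkeeping ones: (i) Lemma~\ref{lem:preswind} is stated for a cycle $C$, so I need to make sure the passage from ``closed walk'' to ``cycle'' is harmless — either by noting that a closed walk is a homomorphic image of a cycle and pulling everything back along that homomorphism, or by quoting that the lemma extends verbatim to closed walks (as the paper's outline already asserts for tight closed walks); and (ii) keeping the direction of the basepoint change straight, i.e. that $\ec{\psi(C)} = \ec{\beta_{W_v}(\phi(C))}$ is equivalent to $\ec{\phi(C)} = \ec{\beta_{W_v}(\psi(C))}$ — this is immediate from $\beta_{\rev{W_v}} \circ \beta_{W_v} = \mathrm{id}$ together with $\beta_{\rev{W_v}}\beta_{W_v}$ acting trivially on classes, a fact already noted when the isomorphism $\pi(H;b) \to \pi(H;r)$ was introduced. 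So the expected write-up is three or four sentences invoking Lemma~\ref{lem:preswind} vertexwise.
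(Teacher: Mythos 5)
Your approach is the same as the paper's: restrict the reconfiguration to an arbitrary closed walk $C$ based at $v$, observe that the induced trace of $v$ is $W_v$, and invoke Lemma~\ref{lem:preswind}.  Your bookkeeping point~(i) (closed walk vs.\ cycle) is handled correctly; the argument of Lemma~\ref{lem:preswind} only uses that $C$ is a closed walk, so the passage is indeed harmless.

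The problem is in your point~(ii).  The claimed equivalence
\[
\ec{\psi(C)} = \ec{\beta_{W_v}(\phi(C))}
\quad\Longleftrightarrow\quad
\ec{\phi(C)} = \ec{\beta_{W_v}(\psi(C))}
\]
is not correct as stated, and the justification via $\beta_{\rev{W_v}}\circ\beta_{W_v}=\mathrm{id}$ does not produce it.  Applying $\beta_{\rev{W_v}}$ to the left identity gives $\ec{\phi(C)} = \ec{\beta_{\rev{W_v}}(\psi(C))}$, with a $\rev{W_v}$ in place of $W_v$.  These are different assertions: with $W_v$ a $(\phi(v),\psi(v))$-walk, $\beta_{W_v}(\psi(C))$ is based at $\phi(v)$ while $\beta_{\rev{W_v}}(\psi(C))$ is based at $\psi(v)$, so they do not even live in the same $\pi(H;\cdot)$ unless $\phi(v)=\psi(v)$.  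In fact the left-hand identity $\ec{\psi(C)} = \ec{\beta_{W_v}(\phi(C))}$ itself does not type-check when $W_v$ is a $(\phi(v),\psi(v))$-walk, since $\beta_{W_v}(\phi(C))$ requires $W_v$ to end at $\phi(v)$.

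What is actually going on is that the walk $W_r$ appearing in the statement of Lemma~\ref{lem:preswind} is the \emph{reverse} of the trace as defined for Fact~\ref{fact:necTV}: its proof takes $W_r=(x_0,x_0')$, a $(\psi(r),\phi(r))$-walk, whereas the trace of $r$ under the length-one reconfiguration $\phi\sim\psi$ is $(x_0',x_0)=(\phi(r),\psi(r))$.  Once you observe this, Lemma~\ref{lem:preswind} says $\ec{\psi(C)} = \ec{\beta_{\rev{W_v}}(\phi(C))}$ where $W_v$ is the trace of Fact~\ref{fact:necTV}, and applying $\beta_{W_v}$ (the inverse of $\beta_{\rev{W_v}}$) gives exactly $\ec{\phi(C)} = \ec{\beta_{W_v}(\psi(C))}$, i.e.\ equation~\eqref{eq:TV}, with no further rearrangement.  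So the route you sketched does close up, but only after you flip the orientation of the walk supplied to the lemma; as written, the algebraic step is off by a reversal.
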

  \begin{proof}
    In Section \ref{sect:outline} we observed that $W$ induces a $(\phi(C),\psi(C))$-walk in $\bHom(C,H)$ for any closed walk $C$ of $G$.  In Lemma \ref{lem:preswind} we observed that where $v$ is the basepoint of $C$ this implies $\ec{\psi(C)} = \ec{\beta_{W_v}(\phi(C)}$ in $\pi(H; \phi(v))$. Thus any reconfiguration $W$ from $\phi$ to $\psi$  yields a system $(W_v)$ of walks that is topologically valid for $\phi$ and $\psi$.
  \end{proof}

   Generalizing the condition for closed walks, a $uv$-walk $U$ in $G$ is {\em (topologically) preserved} by a system $(W_v)$ of walks for $\phi$ and $\psi$ if 
   \begin{equation}\label{eq:TV1}
      \ec{\phi(U)} = \ec{W_u \con \psi(U) \con \rev{W_v}}. 
    \end{equation}  
    
   By \eqref{eq:homwalks} and \eqref{eq:perm} we can write in the following form: 
     \begin{align}
    \ec{\phi(U) \con W_v \con \rev{\psi(U)} \con \rev{W_u}} = 0  &     \label{eq:TV3}\tag{\ref{eq:TV1}$'$}
  \end{align}
   
  Now, for a $uw$-walk $U$ and a $wv$-walk $V$ we have
  \begin{multline*}
    \phi(U \con V) \con W_v \con \rev{\psi(U \con V)} \con \rev{W_u}  = \\
     \left( \phi(U) \con W_w \con \rev{\psi(U)} \con \rev{W_u} \right) \con_{W_w}
    \left( \phi(V) \con W_v \con \rev{\psi(V)} \con \rev{W_w} \right),
  \end{multline*}
  so by Lemma \ref{lem:sum} and form \eqref{eq:TV3}, we get that if two of $U, V$ and $U \con V$ are preserved by $(W_v)$, then so is the third of them. This yields the following alternate definition. 

  \begin{fact}\label{fact:TV'}
    A system of walks $(W_v)$ is topologically valid for $\phi$ and $\psi$ if and
    only if every edge in $G$ is topologically preserved by $(W_v)$ if and only if 
    every walk in $G$ is toplogically preserved by  $(W_v)$.
  \end{fact}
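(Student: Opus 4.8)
The plan is to prove the chain of equivalences in Fact~\ref{fact:TV'} by a standard ``bootstrapping from edges'' argument, using the gluing lemma (Lemma~\ref{lem:sum}) as the engine. First I would observe that the three conditions are arranged in increasing generality: being topologically valid means every \emph{closed} walk $C$ satisfies~\eqref{eq:TV}; the middle condition asks only that every \emph{edge} is preserved; and the third asks that every walk (open or closed) is preserved. The implication ``every walk preserved'' $\Rightarrow$ ``every edge preserved'' is trivial, since edges are walks. So the content is in showing (i) ``every edge preserved'' $\Rightarrow$ ``every walk preserved'', and (ii) ``every walk preserved'' $\Rightarrow$ ``topologically valid'', together with the reverse of the whole loop.

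For step~(i), I would induct on the length of a walk $U$ in $G$. A single vertex (the trivial walk) is preserved because $\phi(U)$ and $\psi(U)$ are both the empty walk and $W_u \con 0 \con \rev{W_u}$ is contractible by the remarks following Definition~\ref{def:moves0}; a single edge is preserved by hypothesis. For the inductive step, write a walk of length $n\geq 2$ as $U \con V$ where $U$ is a $uw$-walk and $V$ is a $wv$-walk, each shorter. By the inductive hypothesis both $U$ and $V$ are preserved, and the displayed decomposition just before the statement expresses $\phi(U\con V)\con W_v \con \rev{\psi(U\con V)}\con \rev{W_u}$ as a $\con_{W_w}$-gluing of the two ``defect cycles'' for $U$ and $V$. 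Since two of the three cycles in that gluing (namely the ones for $U$ and $V$) are contractible by form~\eqref{eq:TV3}, Lemma~\ref{lem:sum} forces the third---the defect cycle for $U\con V$---to be contractible as well, which is exactly~\eqref{eq:TV3} for $U\con V$, i.e. $U\con V$ is preserved.

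For step~(ii), a closed walk $C$ with basepoint $v$ is in particular a $vv$-walk, so if every walk is preserved then~\eqref{eq:TV1} holds with $u=v$ and $U=C$; rewriting via~\eqref{eq:homwalks} and~\eqref{eq:perm} (i.e. passing between~\eqref{eq:TV1} and~\eqref{eq:TV3}, and then back to the $\beta$-form using $\beta_{W_v}(\psi(C))=W_v\con\psi(C)\con\rev{W_v}$) gives precisely~\eqref{eq:TV}. So every walk preserved $\Rightarrow$ topologically valid. To close the loop I then need ``topologically valid'' $\Rightarrow$ ``every edge preserved''. This is the one direction that is not immediate, because the definition of topological validity only quantifies over \emph{closed} walks, and a single edge $uv$ is not closed. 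The fix: given an edge $uv$, consider the closed walk $C$ at $u$ that traverses the edge to $v$ and back, $C = (u\,v\,u)$ as a walk in $G$. Topological validity gives $\ec{\phi(C)} = \ec{\beta_{W_u}(\psi(C))}$. Now $\phi(C) = \phi(u)\phi(v)\phi(u)$, which by~\ref{P1} is homotopic to the empty walk at $\phi(u)$ precisely when we can cancel the out-and-back, and similarly for $\psi(C)$; unwinding these cancellations (which hold in $\Pi(H;\cdot)$ because $\phi(u)\sim\phi(v)$ and a walk followed by its reverse is contractible, as noted after Definition~\ref{def:moves0}) reduces the identity $\ec{\phi(C)} = \ec{\beta_{W_u}(\psi(C))}$ to exactly the edge-preservation identity~\eqref{eq:TV1} for $uv$ after one application of~\eqref{eq:homwalks}. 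Actually it is cleaner to go the other way around the cycle of implications: prove edges $\Rightarrow$ walks $\Rightarrow$ closed walks (topologically valid), and separately topologically valid $\Rightarrow$ edges via the out-and-back trick; then all three are equivalent.

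The main obstacle I anticipate is the bookkeeping in step~(i)/the gluing step: one must verify that the algebraic identity
\[
  \phi(U \con V) \con W_v \con \rev{\psi(U \con V)} \con \rev{W_u}
  = \bigl( \phi(U) \con W_w \con \rev{\psi(U)} \con \rev{W_u} \bigr) \con_{W_w}
    \bigl( \phi(V) \con W_v \con \rev{\psi(V)} \con \rev{W_w} \bigr)
\]
really is an instance of the $\con_X$ operation with $X = W_w$---i.e. that the first cycle has the form $U_1\con W_w\con V_1$ and the second has the form $U_2 \con \rev{W_w}\con V_2$ with the $U_i,V_i$ assembling correctly---and that this is an equality of closed walks up to the cyclic shifts that Lemma~\ref{lem:sum} already absorbs. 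This is purely formal (it amounts to $\phi(U\con V) = \phi(U)\con\phi(V)$, $\rev{\psi(U\con V)} = \rev{\psi(V)}\con\rev{\psi(U)}$, and regrouping), but it is the only place where an error could creep in, so I would write it out carefully once and then invoke Lemma~\ref{lem:sum} and form~\eqref{eq:TV3} to conclude. Everything else is an immediate consequence of~\eqref{eq:homwalks}, \eqref{eq:perm}, and the definitions.
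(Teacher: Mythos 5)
Your decomposition (edges $\Rightarrow$ walks by induction on length via Lemma~\ref{lem:sum} applied to the displayed gluing identity; walks $\Rightarrow$ closed walks trivially) is exactly the engine the paper uses, and your verification that the gluing really is an instance of $\con_{W_w}$ is right. The gap is in the ``out-and-back'' step for ``topologically valid $\Rightarrow$ every edge preserved.'' The closed walk $C=(u,v,u)$ is contractible in $G$ (one application of~\ref{P2} replaces $v$ by $u$, then two applications of~\ref{P1} finish), and homomorphisms carry these reduction moves forward, so $\phi(C)$ and $\psi(C)$ are automatically contractible in $H$; hence $\ec{\beta_{W_u}(\psi(C))}=\ec{W_u\con\rev{W_u}}=0$ as well. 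The instance of~\eqref{eq:TV} you invoke is therefore the tautology $0=0$; it carries no information about $W_v$, and there is no nontrivial ``unwinding'' to perform. Indeed the implication you are after fails outright: take $G=K_2$ (reflexive) with vertices $u,v$, let $H$ be a reflexive $5$-cycle, let $\phi=\psi$ send $u,v$ to adjacent vertices of $H$, let $W_u$ be the constant walk and $W_v$ a walk that winds once around $H$. Every closed walk of $G$ is contractible, so $(W_u,W_v)$ is vacuously topologically valid, yet the edge $uv$ is not preserved (the reduced forms of $\phi(uv)$ and $W_u\con\psi(uv)\con\rev{W_v}$ have different lengths, so by Lemma~\ref{lem:redunique} they are in different classes).

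This is not merely a slip in your write-up: the paper's own argument for Fact~\ref{fact:TV'} establishes only the two-of-three closure of the set of preserved walks, which yields ``all edges preserved $\Leftrightarrow$ all walks preserved $\Rightarrow$ topologically valid''; the converse ``topologically valid $\Rightarrow$ all edges preserved'' is neither argued nor used in the generality stated. Where that direction is actually needed (in the proof of Lemma~\ref{lem:uniquesystem}), the system $(W_v)$ has been built along a spanning tree so that every tree path is preserved by construction, and then two-of-three applied to $T_u\con e\con\rev{T_v}$ recovers preservation of the non-tree edge $e$; that spanning-tree hypothesis is doing real work and cannot be dropped. So you should either restrict Fact~\ref{fact:TV'} to the forward implications, or state it under the hypothesis that $(W_v)$ is generated from a single $W_r$ along a spanning tree --- but do not try to push the converse through the out-and-back cycle, which is a dead end.
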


  The topologically valid system of walks $(W_v)$ we get from a reconfiguration $W$ is not generally unique, indeed each walk in general may have many repeated vertices, but it is unique up to homotopy classes. 
     
   \begin{lemma}\label{lem:uniquesystem}
      Let $W_r$ be a reduced topoligically valid  $(\phi(r),\psi(r))$-walk in $H$.  There is a unique topologically valid system $(W_v)$ of reduced walks for $\phi$ and $\psi$ that contains $W_r$.  
   \end{lemma}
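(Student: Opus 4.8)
The plan is to use topological validity of the single walk $W_r$ to pin down the homotopy class of each $W_v$, and then to recover the walks themselves from uniqueness of reduced representatives (Lemma~\ref{lem:redunique}, whose proof in fact gives a unique reduced walk in every $\Pi(H;a,b)$).

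Since $G$ is connected, I would first fix, for each vertex $v$ of $G$, an $(r,v)$-walk $P_v$ in $G$. Given any topologically valid system $(W_v)$ of reduced walks with $W_r$ in the $r$-coordinate, Fact~\ref{fact:TV'} says the walk $P_v$ is topologically preserved, i.e.\ $\ec{\phi(P_v)} = \ec{W_r \con \psi(P_v) \con \rev{W_v}}$; shuffling this with \eqref{eq:homwalks} and the cyclic-shift invariance \eqref{eq:perm}--\eqref{eq:perm2} (one reversal and one shift) produces
\[ \ec{W_v} \;=\; \ec{\rev{\phi(P_v)} \con W_r \con \psi(P_v)} . \]
Hence $W_v$ lies in a class determined by $W_r$ alone, and being reduced it is the unique reduced walk of that class. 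Taking $P_r$ to be the trivial walk shows the prescribed class at $r$ is $\ec{W_r}$, whose reduced representative is $W_r$ itself, so any such system must agree with $W_r$ at $r$. This already yields uniqueness.

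For existence I would run the argument backwards: \emph{define} $W_v$ to be the unique reduced walk in $\ec{\rev{\phi(P_v)} \con W_r \con \psi(P_v)}$ (a $(\phi(v),\psi(v))$-walk), with $W_r$ in its own coordinate, and then check two things. (i) The class is independent of the chosen $(r,v)$-walk: if $P,P'$ are two $(r,v)$-walks then $P \con \rev{P'}$ is a closed walk of $G$ at $r$, and topological validity of $W_r$ (Definition~\ref{def:TV}) gives $\ec{\phi(P \con \rev{P'})} = \ec{\beta_{W_r}(\psi(P \con \rev{P'}))}$; expanding $\phi(P\con\rev{P'}) = \phi(P) \con \rev{\phi(P')}$ and $\psi(P\con\rev{P'}) = \psi(P) \con \rev{\psi(P')}$ and then applying \eqref{eq:homwalks}, a reversal, and a cyclic shift \eqref{eq:perm2} rewrites the identity as $\ec{\rev{\phi(P)} \con W_r \con \psi(P)} = \ec{\rev{\phi(P')} \con W_r \con \psi(P')}$, exactly as required. (ii) The resulting system is topologically valid: by Fact~\ref{fact:TV'} it suffices to show each edge $uv$ is topologically preserved, and taking $P = P_u$ and $P' = P_u$ followed by $uv$ (viewed as a length-one walk), substituting the defining classes of $W_u$ and $W_v$ into $\ec{W_u \con \psi(uv) \con \rev{W_v}}$ and cancelling the back-and-forth segments $\psi(uv) \con \rev{\psi(uv)}$, then $W_r \con \rev{W_r}$, then $\rev{\phi(P_u)} \con \phi(P_u)$ via the identity $\ec{X \con \rev{X}} = \ec{0}$ from Section~\ref{sect:top} (using that concatenation descends to homotopy classes), collapses it to $\ec{\phi(uv)}$, which is the preservation identity \eqref{eq:TV1} for $uv$. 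Finally, this prescription for $(W_v)$ is plainly computable in polynomial time using Lemma~\ref{lem:redunique}.

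The hard part will be the bookkeeping in step (i): one must juggle the reversals, the basepoint-change operator $\beta_{W_r}$, and the cyclic shifts carefully enough that the \emph{single} hypothesis ``$W_r$ is topologically valid'' forces the prescription $W_v := \mathrm{reduced}\bigl(\rev{\phi(P_v)} \con W_r \con \psi(P_v)\bigr)$ to be consistent across all $v$ and all choices of $P_v$ simultaneously. Everything else — the reduction to edges via Fact~\ref{fact:TV'}, the cancellations, and the appeal to Lemma~\ref{lem:redunique} — amounts to routine use of the identities \eqref{eq:homwalks}--\eqref{eq:perm2} and $\ec{X \con \rev{X}} = \ec{0}$.
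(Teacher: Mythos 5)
Your proof is correct and follows essentially the same approach as the paper: pin down each $W_v$'s homotopy class via a walk from $r$ and conclude from Lemma~\ref{lem:redunique} that the reduced representative is determined. The paper streamlines this by using a spanning tree (a canonical choice of $(r,v)$-walk) so it never needs your step (i), and it leaves the uniqueness half largely implicit, whereas you spell it out explicitly and allow arbitrary $(r,v)$-walks at the cost of verifying independence of the choice; both versions rest on the same cancellation computations.
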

   \begin{proof}    
     First we show that there is a topologically valid system $(W_v)$ of walks containing $W_r$.  Choose a spanning tree $T$ of $G$ with root $r$.  Inductively build $(W_v)$ by setting $W_v$ to be the reduction of $\phi(vu) \con W_u \con \psi(uv)$, where the $(\phi(u),\psi(u))$-walk $W_u$ is already defined for the precursor $u$ of $v$ in $T$.
    
    By construction any edge in $T$, so any path in $T$ is preserved by $(W_v)$.  Thus the system $(W_v)$ is topologically valid if and only if every edge $e$ in $G \setminus T$ is also preserved by $(W_v)$. 

    If there is some edge $e = uv$ that is not preserved by $(W_v)$.  Letting $T_v$ denote the unique path in $T$ from $r$ to $v$, we clearly have then that the closed walk $C =  T_u \con e \con \rev{T_v}$ is not preserved by $(W_v)$, so $\ec{\phi(C)} \neq \ec{\beta_{W_r}(\psi(C))}$, contradicting the fact that $W_r$ was topologically valid.  
   \end{proof}

       In light of this fact, we make the following definition.  

    \begin{definition}
       A topologically valid $(\phi(r),\psi(r))$-walk $W_r$ contained in a topologically valid system $(W_v) = (W_v)_{v \in V(G)}$ of walks,  {\em generates} the system $(W_v)$. 
    \end{definition}

     Actually our proof of Lemma \ref{lem:uniquesystem} did more.  Defining $(W_v)$ as we did in the proof can clearly be done in polynomial time.  Once we have done so, checking that  at most quadratic number of edges not in $T$ are preserved can be done in polynomial time. 
     Thus we showed the following.

  \begin{corollary}\label{cor:TValg}
    Let $W_r$ be reduced walk from $\phi(r)$ to $\psi(r)$. We can determine in polynomial time if $W_r$ is topologically valid. If it is, we produce the system $(W_v)$ it generates, if not, we provide a closed walk $C$ of $G$ such that $\ec{\phi(C)} \neq \ec{\beta_{W_r}(\psi(C)}$.
  \end{corollary}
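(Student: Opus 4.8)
The plan is to observe that the proof of Lemma~\ref{lem:uniquesystem} is already constructive, and simply to account carefully for the running time.

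First I would fix a spanning tree $T$ of $G$ rooted at $r$ (computable in linear time) and build the candidate system $(W_v)$ exactly as in the proof of Lemma~\ref{lem:uniquesystem}: processing the vertices of $T$ in order of depth, I set $W_v$ to be the reduced form of $\phi(vu)\con W_u\con\psi(uv)$, where $u$ is the parent of $v$ in $T$; each reduction is computed in linear time by Lemma~\ref{lem:redunique}. To see that this stage runs in polynomial time one must bound the lengths of the walks $W_v$: since $T$ has depth at most $|V(G)|$ and each step appends at most two vertices before reduction, every $W_v$ has length at most $|W_r|+2|V(G)|$, which is polynomial in the input size (recall $H$ is fixed). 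Hence $(W_v)$ is built in polynomial time.

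Next I would record, as in the proof of Lemma~\ref{lem:uniquesystem}, that by construction every edge of $T$, and hence by Fact~\ref{fact:TV'} every walk in $T$, is topologically preserved by $(W_v)$. Therefore, again by Fact~\ref{fact:TV'}, the system $(W_v)$ is topologically valid if and only if every edge $e=uv$ of $G$ not in $T$ is preserved, i.e.\ satisfies $\ec{\phi(uv)}=\ec{W_u\con\psi(uv)\con\rev{W_v}}$. By \eqref{eq:homwalks} this is equivalent to $\ec{\phi(uv)\con W_v\con\rev{\psi(uv)}\con\rev{W_u}}=0$, and since Lemma~\ref{lem:redunique} computes the reduced form of any class in linear time, I can test this for a single $e$ in polynomial time by checking whether that reduced form is the empty walk. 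There are at most $\binom{|V(G)|}{2}$ edges to check, so the whole test runs in polynomial time.

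Finally I would extract the output. If every non-tree edge is preserved, then $(W_v)$ is topologically valid; in particular the walk $W_r$ it contains is topologically valid, and I return $(W_v)$. If some non-tree edge $e=uv$ fails the test, then---exactly as argued in the proof of Lemma~\ref{lem:uniquesystem}, using that the tree paths $T_u$ and $T_v$ are preserved while $e$ is not---the closed walk $C=T_u\con e\con\rev{T_v}$ with basepoint $r$ satisfies $\ec{\phi(C)}\neq\ec{\beta_{W_r}(\psi(C))}$; this $C$ simultaneously certifies that $W_r$ is not topologically valid (it violates \eqref{eq:TV}) and is the required witness. The only point demanding real care is the length bound in the first paragraph; once that is secured, every remaining step is a direct invocation of Lemma~\ref{lem:redunique} and Fact~\ref{fact:TV'}, so I do not expect a genuine obstacle here.
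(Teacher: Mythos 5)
Your proof is correct and follows essentially the same approach as the paper, which simply observes that the proof of Lemma~\ref{lem:uniquesystem} is already an algorithm: build $(W_v)$ along a spanning tree rooted at $r$, test the non-tree edges, and in case of failure output the witness $C=T_u\con e\con\rev{T_v}$. You spell out the length bound on the $W_v$ and the reduction-based equality test that the paper treats as clear, but the argument is the same.
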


   \section{Basepoint-free homotopy}\label{sect:bfhom} 

   Fact \ref{fact:necTV} tells us that there is a $(\phi,\psi)$-reconfiguration in $\bHom(G,H)$  only if there is a topologically valid walk $W_r$ between the basepoints $\phi(r)$ and $\psi(r)$,
  and Corollary \ref{cor:TValg} tells us how to decide if a given $W_r$ is topologically valid. 
   Our main result in this section, Lemmma \ref{lem:algStep1}, tells us how to decide if there is any
   walk $W_r$ that is topolocially valid for $\phi$ and $\psi$.  
   If a randomly chose $(\phi(r),\psi(r))$ walk $W_r$ is not valid, then Corollary \ref{cor:TValg} gives us a non-contractible closed walk $C$ such that $\ec{\phi(C)} \neq \ec{\beta_{W_r}(\psi(C)}$. This $C$ is our starting point in this section, as it greatly limits the possibilites for a valid $W_r$.  To characterise the possibilites, we use a version of $\Pi(H,r)$ in which we let the basepoint move freely.

    
    \begin{definition}
      Let $\Pi(H)$ be the graph we get from the disjoint union, over all $r \in V(H)$, of $\Pi(H;r)$ 
    by adding an edge between walks $y$ and $x = (x_0,x_1, \dots,x_{\ell-1},x_0)$ if 
      \begin{enumerate}
          \item[\mylabel{P3}{(P3)}] $y = (x_1, \dots, x_{\l-1})$ and $x_1 = x_{\l-1}.$
      \end{enumerate}
    \end{definition}
    
     Let $\pi(H)$ be the set of components of $\Pi(H)$, and let $\cl{C}$ be the component of $\Pi(H)$ containing $C$.   Clearly we have that
     \begin{equation}\label{eq:equivhom}
        \ec{C} = \ec{C'}  \mbox{ implies }   \cl{C} = \cl{C'}. 
    \end{equation}        
    The converse is not true in general, indeed not even heuristically, as $C$ and $C'$ may satisfy $\cl{C} = \cl{C'}$ and have different basepoints.   However, if we have a walk $W$ from $C$ to $C'$ in $\Pi(H)$, then it is not hard to see that $\ec{C} = \ec{\beta_{W_r}(C')}$ where $W_r$ is the trace of the basepoint $r$ of $C$.

    If $C$ is a closed walk with basepoint $r$, then it is trivial to see that $\cl{C} = \cl{\beta_W(C)}$ for any walk $W$ ending  at $r$, and so from Lemma \ref{lem:preswind} we get that if there is a $(\phi,\psi)$-path in $\bHom(C,H)$ 
    then $\cl{\phi(C)} = \cl{\psi(C)}$. A closed walk $C$ is {\em free-reduced} if it is a shortest closed walk in $\cl{C}$.  Though any class in $\pi(H;r)$ contains a unique reduced closed walk, this does not hold for free-reduced closed walks in classes of $\pi(H)$. Indeed, if $C$ is free-reduced, any cyclic shift of it is also free reduced. However, any non-contractible reduced closed walk $C$, `contains' a unique free-reduced closed walk. 
    
    \begin{fact}\label{fact:redcycle}
     If $C$ is a non-contractible reduced closed walk with basepoint $r$, then $C$ decomposes uniquely as 
     $C = \beta_T(C_f)$ for some free reduced closed walk $C_f$, and some reduced walk $T$ from $r$ to the first vertex of $C_f$.  
    \end{fact}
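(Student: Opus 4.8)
The plan is to prove existence and uniqueness of the decomposition $C = \beta_T(C_f)$ by ``pushing the basepoint along $C$ until it lands on the shortest closed walk in $\cl C$''. Since $C$ is reduced (shortest in its class $\ec C$ in $\pi(H;r)$) and non-contractible, write $C = (x_0, x_1, \dots, x_{\ell-1}, x_0)$ with $x_0 = r$. The key observation is that applying a \ref{P3}-move to a closed walk $D = (y_0, \dots, y_{m-1}, y_0)$ with $y_1 = y_{m-1}$ produces $D' = (y_1, \dots, y_{m-1})$, which is a closed walk of length $m-2$ in the same class $\cl{D}$; conversely one can re-attach a basepoint. So inside $\cl C$, the closed walks obtainable from $C$ by a sequence of \ref{P3}-moves (and their inverses) and cyclic shifts are exactly the ``basepoint changes'' $\beta_W(C)$ for suitable walks $W$, and their lengths are controlled. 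I would define $C_f$ to be a free-reduced closed walk in $\cl C$ and show it is reached from $C$ by such moves; then read off $T$ as the trace of the basepoint.

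First I would establish existence. Let $C_f$ be any shortest closed walk in $\cl C$; since $C$ is non-contractible, $C_f$ has positive length and in fact length $\ge 3$ (a closed walk of length $1$ or $2$ is contractible by \ref{P1}, \ref{P2}). There is a walk $W$ from $C$ to $C_f$ in $\Pi(H)$; as noted in the paragraph before Fact~\ref{fact:redcycle}, this gives $\ec C = \ec{\beta_{W_r}(C_f)}$ where $W_r$ is the trace of the basepoint $r$ of $C$. Let $T$ be the reduction (in the sense of Lemma~\ref{lem:redunique}, applied to walks in $\Pi(H; r, s)$ where $s$ is the first vertex of $C_f$) of $W_r$. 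Then $\beta_T(C_f)$ is homotopic to $C$; to see that $\beta_T(C_f)$ is already \emph{reduced}, note that any shortening move on $\beta_T(C_f) = T \con C_f \con \rev T$ either shortens $T$ (impossible, as $T$ is reduced), shortens $C_f$ (impossible, as $C_f$ is free-reduced and a length-reducing move on a closed walk at an interior vertex would contradict minimality in $\cl{C_f} = \cl C$), or cancels at the two junction points $s$; but a cancellation at a junction, combined with \ref{P3}, would exhibit a strictly shorter closed walk in $\cl C$, again contradicting that $C_f$ is free-reduced. Since reduced closed walks in a class of $\pi(H;r)$ are unique by Lemma~\ref{lem:redunique}, and $C$ is reduced, we conclude $C = \beta_T(C_f)$, which also forces $|C| = 2|T| + |C_f|$.

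For uniqueness, suppose $C = \beta_{T}(C_f) = \beta_{T'}(C_f')$ with $C_f, C_f'$ free-reduced and $T, T'$ reduced. Comparing the two presentations vertex-by-vertex: $C$ has a well-defined ``core'', namely the maximal interval on which $C$ does not simply retrace a prefix; formally, the length of $T$ is determined by $C$ as the largest $t$ such that $x_i = x_{2t - i}$ for $0 \le i \le t$ and the closed walk $(x_t, \dots, x_{\ell - t}, x_t)$ obtained by stripping $t$ vertices from each end is free-reduced. I would argue that if $t < t'$ then the core at depth $t$ would still admit a \ref{P3}-style cancellation (since stripping $t' > t$ layers still yields a closed walk in $\cl C$, so the depth-$t$ core is not free-reduced), contradicting the choice of $t$; hence $t = t'$, so $T = T'$ (reduced walks being unique) and therefore $C_f = C_f'$. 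The main obstacle here is making the ``core / peeling'' argument fully rigorous --- precisely, showing that the peeling is forced and that at each stage the only available move is the \ref{P3}-retraction at the endpoints, which is where triangle-freeness of $H$ is used (it limits, via the remark after Definition~\ref{def:moves0}, how a reduced walk can fail to be free-reduced). I expect this bookkeeping, rather than any deep idea, to be the delicate part; once it is in place, existence and uniqueness follow as above.
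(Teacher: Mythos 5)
Your approach differs from the paper's, and there is a genuine gap in the existence half. The paper proves the fact by a direct greedy peeling: since $C$ is reduced with respect to \ref{P1} and \ref{P2}, the only move that can shorten it in $\Pi(H)$ is \ref{P3}, which applies precisely when $x_1 = x_{\ell-1}$; in that case one peels off $T_1 = (x_0,x_1)$, obtaining a shorter reduced closed walk $C_1$ with $C=\beta_{T_1}(C_1)$, and recurses. The tail $T$ is built up as a prefix of $C$ and the free-reduced core $C_f$ is just what remains when \ref{P3} no longer applies, so both existence and uniqueness come out of the same one-line induction.

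You instead start from an arbitrary shortest $C_f\in\cl C$, let $T$ be the reduction of the trace $W_r$ of some walk from $C$ to $C_f$ in $\Pi(H)$, and claim $\beta_T(C_f)$ is already reduced, from which $C=\beta_T(C_f)$ follows by Lemma~\ref{lem:redunique}. The step that fails is the claim that a cancellation at a junction of $T\con C_f\con\rev T$ ``would exhibit a strictly shorter closed walk in $\cl C$, contradicting that $C_f$ is free-reduced.'' A single junction cancellation (say $t_{k-1}=c_1$) does not contradict minimality of $|C_f|$ at all --- $\cl C$ contains every cyclic shift of $C_f$, all of the same length, and the cancellation merely shortens $T$ and cyclically shifts $C_f$. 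Concretely, take $H$ to be a reflexive $5$-cycle $1\text{--}2\text{--}3\text{--}4\text{--}5\text{--}1$ with an extra vertex $6\sim 1$, and $C=(6,1,2,3,4,5,1,6)$, which is reduced and non-contractible. Then $C_f=(2,3,4,5,1,2)$ is a shortest closed walk in $\cl C$, and a path in $\Pi(H)$ from $C$ to this $C_f$ (via $(1,2,3,4,5,1)$ and a cyclic shift) has basepoint trace reducing to $T=(6,1,2)$. But $\beta_T(C_f)=(6,1,2,3,4,5,1,2,1,6)$ is \emph{not} reduced (\ref{P2} applies at the $2$ between the two $1$'s), so it is not $C$; the true decomposition is $T=(6,1)$, $C_f=(1,2,3,4,5,1)$. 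The phrase ``any shortest closed walk in $\cl C$'' is thus too permissive: only one specific cyclic shift of $C_f$ works, and identifying it already amounts to the peeling argument you then defer to for uniqueness and acknowledge as the ``delicate part.'' You would be better off running the peeling argument once, as in the paper, and reading off both existence and uniqueness from it directly.
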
 
    \begin{proof}
       Indeed, as $C = (x_0,x_1, \dots, x_{\l-1},x_0)$ is reduced with respect to operations \ref{P1} and \ref{P2} the only possible reduction is operation \ref{P3}, meaning $x_{\l-1} = x_1$ and it reduces to $C_1 = (x_1, x_2 \dots, x_{\l-1})$.  Where $T_1 = (x_0,x_1)$ we have $C_1 = \beta_{T_1}(C_1)$.  By induction we get the fact. 
    \end{proof} 
    
     As the reduced form of $C$ is unique and can be found in linear time, so can its decomposition
     into $\beta_T(C_f)$.  We call this its {\em free decomposition} and call $T$ its {\em tail}.  
     
  This free decomposition does not change by much under basepoint change. To show this we start with a somewhat technical calculation. The intuition for this lemma is perhaps aided by first reading the corollaries that follow it. 
  
  A walk $I$ is {\em initial} in a walk $X$ if $X = I \con J$ for some $J$.  A walk is {\em terminal} in $X$ if it is initial in $\rev{X}$. 
  
  \begin{lemma}\label{lem:fbc}
    Let $A$ and $B$ be non-contractible reduced closed walks with respective free decompositions
    $\beta_T(A_f)$ and $\beta_S(B_f)$.  If $\ec{A} = \ec{\beta_W(B)}$ for a walk $W$ 
    then there is an integer $d$ and initial $I$ of $A_f^d$ such that $B_f = \sigma_I(A_f)$ is a cyclic shift of $A_f$, and $\ec{W} = \ec{T \con I \con \rev{S}}$. This is reduced unless $T$ and $S$ end in a common walk.
    
  \end{lemma}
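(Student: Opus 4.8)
The plan is to pass to the basepoint-free, ``free-reduced'' picture and then argue inside the universal cover $\cU$ from the proof of Lemma~\ref{lem:redunique}, where the statement becomes one about geodesics and translation axes in a tree.

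\emph{Reduction to the free decompositions.} Let $r,r'$ be the basepoints of $A,B$ and $a_f,b_f$ those of $A_f,B_f$. Substituting $A=T\con A_f\con\rev T$ and $B=S\con B_f\con\rev S$ into $\ec A=\ec{\beta_W(B)}$ and applying the basepoint-change isomorphism $\ec X\mapsto\ec{\beta_{\rev T}(X)}=\ec{\rev T\con X\con T}$ from $\pi(H;r)$ to $\pi(H;a_f)$, the two copies of $\rev T\con T$ cancel and one obtains
\[ \ec{A_f}=\ec{\beta_V(B_f)}\quad\text{in }\pi(H;a_f),\qquad V:=\rev T\con W\con S, \]
with $V$ an $(a_f,b_f)$-walk. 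Since $\rev T\con T$ and $S\con\rev S$ are contractible we have $\ec{T\con V\con\rev S}=\ec W$, so it now suffices to produce an integer $d$ and an initial segment $I$ of $A_f^{d}$ with $B_f=\sigma_I(A_f)$ and $\ec V=\ec I$; then $\ec W=\ec{T\con V\con\rev S}=\ec{T\con I\con\rev S}$.

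\emph{The tree argument.} Work in $\cU=\cU(H;a_f)$ with root $\rho=(a_f)$ and projection $\Phi$. A walk is reduced precisely when its vertex sequence has no repetition at distance $1$ or $2$, and from this one checks that the lift of such a walk, started at any vertex of $\cU$, is a geodesic of the same length. As $A_f$ is free-reduced, its cyclic sequence has no repetition at cyclic distance $\le 2$, so $A_f^n$ and $(\rev{A_f})^n$ are reduced for every $n$, and the lifts from $\rho$ of these powers fit together into a bi-infinite geodesic line $L_A\ni\rho$ along which $\Phi$ spells the $A_f$-periodic walk. By Lemma~\ref{lem:redunique} a component of $\Pi(H;a_f,b_f)$ lifts isomorphically onto a component $\Pi(\cU;(a_f),\beta)$ for a single vertex $\beta$, so the identity $\ec{A_f}=\ec{\beta_V(B_f)}$ forces the lifts from $\rho$ of $A_f$ and of $V'\con B_f\con\rev{V'}$ (where $V'$ is the reduced form of $V$, whose lift is the geodesic from $\rho$ to $\beta$, with $V'$ equal to the $\Phi$-image of that geodesic) to end at a common vertex $\alpha$; and the geodesic from $\rho$ to $\alpha$ is the lift of the reduced form of $\ec{A_f}$, namely of $A_f$ itself, hence has length $\ell:=|A_f|$ and $\alpha$ is the first $\rho$-translate along $L_A$. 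Running the same analysis with $\beta_V(B_f^n)$ in place of $\beta_V(B_f)$, using $\ec{\beta_V(B_f^n)}=\ec{A_f^n}$ and letting $n\to\infty$ --- the iterated lifts of $B_f$ from $\beta$ nest into a ray, while the two ``tail'' geodesics of bounded length $|V'|$ can contribute only bounded cancellation --- forces $|B_f|=\ell$, $\beta\in L_A$, and $B_f$ to be the length-$\ell$ closed walk that $\Phi$ spells along $L_A$ starting at $\Phi(\beta)$ in the direction in which the lift of $A_f$ leaves $\rho$. (This is the tree incarnation of the standard fact that conjugate cyclically reduced words in a free group are cyclic rotations of one another, with conjugator determined modulo the cyclic subgroup they generate.) Equivalently, $B_f=\sigma_I(A_f)$ for $I:=V'$, the $\Phi$-image of the geodesic from $\rho$ to $\beta$, which is an initial segment of $A_f^{d}$, where $d$ is the signed number of periods from $\rho$ to $\beta$ along $L_A$ (so $d<0$, with $A_f^{d}:=(\rev{A_f})^{|d|}$, when $\beta$ lies on the side of $\rho$ away from $\alpha$). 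Finally $\ec V=\ec{V'}=\ec I$, whence $\ec W=\ec{T\con I\con\rev S}$.

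\emph{Reducedness of $T\con I\con\rev S$.} Since $A=T\con A_f\con\rev T$ and $B=S\con B_f\con\rev S$ are reduced, $T$ and $S$ are reduced, the second-to-last vertex of $T$ differs from the first steps of both $A_f$ and $\rev{A_f}$, and the second-to-last vertex of $S$ differs from the first steps of both $B_f$ and $\rev{B_f}$. Because $I$ is a sub-walk of the reduced walk $A_f^{d}$ and $B_f=\sigma_I(A_f)$, these are exactly the conditions preventing a distance-$\le 2$ repetition at either junction $T\,|\,I$ or $I\,|\,\rev S$; hence $T\con I\con\rev S$ is reduced whenever $I$ is nonempty. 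If $I$ is empty then $b_f=a_f$ and $B_f=A_f$, so $T\con I\con\rev S=T\con\rev S$, which fails to be reduced exactly when $T$ and $S$ end in a common walk.

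\emph{Main obstacle.} The heart is the second paragraph. The cover $\cU$ was set up in Lemma~\ref{lem:redunique} only to get uniqueness of reduced forms, so one must develop just enough of the ``$\cU$ is a tree on which a free-reduced class acts as a translation along a unique axis'' picture --- or, sidestepping this, perform by hand the cancellation analysis of $V'\con B_f\con\rev{V'}$, exploiting that $B_f$ is cyclically reduced. The bookkeeping around the sign of $d$ (equivalently, whether the cyclic shift is phrased via $A_f$ or $\rev{A_f}$) is a small but genuine point, absorbed by allowing $d$ to range over all of $\mathbb{Z}$.
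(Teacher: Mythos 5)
Your proposal is correct in outline and takes a genuinely different route from the paper. You pass to the universal cover $\cU$ (a tree), build the bi-infinite axis $L_A$ along which the free-reduced walk $A_f$ translates, and then invoke the standard fact about conjugation of cyclically reduced elements in a free group acting on its Cayley tree: conjugates are cyclic rotations, and the conjugator is determined modulo $\langle\sqrt{A_f}\rangle$ to be an initial segment of $A_f^{\pm\infty}$. The paper instead works directly on reduced walks: from $\ec{A}=\ec{\beta_W(B)}$ it derives $\ec{A^d\con W}=\ec{W\con B^d}$, writes the common reduced form as $I_A\con T_W=I_W\con T_B$ (initial of $\beta_T(A_f^d)$ plus tail of $W$ equals head of $W$ plus terminal of $\beta_S(B_f^d)$), and then takes $d$ large so the two reduced powers force an arbitrarily long overlap, pinning $B_f$ down as a shift of $A_f$ and yielding the form $T\con I\con\rev S$ for $W$ up to homotopy. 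These are the same limiting argument seen through two lenses. Your version buys conceptual clarity --- it makes visible that the combinatorics here is literally the Bass--Serre/axis picture for a free group --- and makes the sign of $d$ transparent; the paper's version buys self-containment, needing only the uniqueness-of-reduced-form already proved via $\cU$ in Lemma~\ref{lem:redunique} and not the further ``translation axis'' development. You are right that this further development (or, alternatively, a hand cancellation analysis of $V'\con B_f\con\rev{V'}$) is the real work, and you have correctly identified it as the gap to fill; as it stands that step is invoked rather than proved, so the write-up would need to spell out why the nested lifts of $B_f^n$ from $\beta$ must sit on $L_A$ and why the bounded-length tails cannot destroy this alignment, which is precisely the overlap computation the paper does. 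Your reducedness discussion at the junctions $T\,|\,I$ and $I\,|\,\rev S$ is fine and slightly more explicit than the paper's one-sentence disposal of that point.
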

  \begin{proof}
    As $\ec{A} = \ec{\beta_W(B)}$ we have $\ec{A^d} = \ec{\beta_W(B^d)}$ so $\ec{A^dW} = \ec{WB^d}$ for any integer $d$. While $A^dW$ and $WB^d$ are not generally reduced, $A^d$ and $B^d$ reduce to $\beta_T(A_f^d)$ and $\beta_S(B_f^d)$, and so by taking $d$ large enough, we can say $A^dW = WB^d$ reduces to 
    \begin{equation}\label{eq:com}
         I_A \con T_W = I_W \con T_B\tag{*} 
    \end{equation}  
    for some initial $I_A$ of $\beta_T(A_f^d)$,  some termimal $T_W$ of $W$, some initial $I_W$ of $W$ and some terminal $T_B$ of $\beta_S(B_f^d)$.  As \eqref{eq:com} holds for all large enough $d$,  and $I_A$ and $T_B$ grow with $d$, we have that $|A_f| = |B_f|$, and that by taking $d$ large enough,  $I_A$ and $T_B$ can be assured to have an arbitrarily long overlap, so $B_f$ is some shift of $A_f$, as needed.  We now just have to verify the decomposition of $W$. 
    
    Again from \eqref{eq:com} for large enough $d$ we get that the end $T_W$ of $W$ is terminal in $T_B$ so in $B_f^d \con \rev{S}$, and as $\beta_T(A_f^d) \con W$ reduced to $I_A \con T_W$, we get that $W = W' \con T_W$ for some $W'$ which cancels with the end of $A_f^d \con \rev{T}$.  Thus $W'$ is initial in $T \con A_f^{-d}$. 
    
     Now, $W'$ ends where $T_W$ starts. If $T$ and $S$ end in a common walk, then it is possible that $W' \con T_W$ is the reduction of $T \con S$.  Otherwise $W'$ contains $T$ and $T_W$ contains $S$.  In this case $W'$ begins with $T$ and then contains some initial walk of $A_f^{-d}$.  So $W' \con T_W$ is $T \con I \con \rev{S}$ for some initial $I$ of $A^d_f$ or $A_f^{-d}$, as needed. That $B_f = \beta_I(A_f)$ now follows from the fact that $B_f$ is a shift of $A$, and that $I$ is an initial walk in $A_f$ or its inverse that has the right endpoints.  
  \end{proof}

  From this we get a couple of immediate corollaries.

    \begin{corollary}\label{cor:freeredcycle}
      If $A_f$ and $B_f$ are free reduced non-contractible closed walks with $\cl{A_f} = \cl{B_f}$
      then $B_f$ is a cyclic shift of $A_f$.
    \end{corollary}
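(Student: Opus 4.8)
The plan is to derive this as a one-step consequence of Lemma~\ref{lem:fbc}. Since $\cl{A_f} = \cl{B_f}$, I would first fix a walk $W$ from $A_f$ to $B_f$ in $\Pi(H)$ and, using the observation recorded just before Fact~\ref{fact:redcycle}, pass from the basepoint-free equality to the based one: if $W_r$ denotes the trace of the basepoint of $A_f$ along $W$ (a walk from the basepoint of $A_f$ to the basepoint of $B_f$), then $\ec{A_f} = \ec{\beta_{W_r}(B_f)}$.

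Next I would verify the two input hypotheses of Lemma~\ref{lem:fbc}. Non-contractibility of $A_f$ and $B_f$ is assumed. For reducedness, note that the class $\ec{A_f}$ in $\pi(H;\cdot)$ sits inside the component $\cl{A_f}$ of $\Pi(H)$, so a shortest closed walk in $\cl{A_f}$ is in particular a shortest closed walk in $\ec{A_f}$; hence $A_f$ is reduced, and likewise $B_f$. For the free decompositions, I would observe that in $A_f = \beta_T(A_{ff})$ as produced by Fact~\ref{fact:redcycle}, the free part $A_{ff}$ again lies in $\cl{A_f}$ and has length $|A_f| - 2|T|$, so minimality of $|A_f|$ in $\cl{A_f}$ forces $T$ to be the empty walk and $A_{ff} = A_f$; the same argument applies to $B_f$, so both free decompositions have trivial tails.

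With these facts in hand, applying Lemma~\ref{lem:fbc} with $A := A_f$, $B := B_f$, $W := W_r$ yields an integer $d$ and an initial segment $I$ of $A_f^{d}$ such that $B_f = \sigma_I(A_f)$, i.e. $B_f$ is a cyclic shift of $A_f$, which is exactly the claim. I expect the only points needing care to be the passage from $\cl{A_f} = \cl{B_f}$ to the based equality $\ec{A_f} = \ec{\beta_{W_r}(B_f)}$ and the remark that free-reduced closed walks have empty tails in their free decomposition; everything else is a direct invocation of Lemma~\ref{lem:fbc}.
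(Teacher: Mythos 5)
Your proposal is correct and follows essentially the same route as the paper's own one-line proof: pass from $\cl{A_f}=\cl{B_f}$ to a based equality $\ec{A_f}=\ec{\beta_{W_r}(B_f)}$ using the trace observation recorded just before Fact~\ref{fact:redcycle}, then invoke Lemma~\ref{lem:fbc}. Your version is a little more explicit than the paper's in spelling out why $A_f$ and $B_f$ satisfy the hypotheses of Lemma~\ref{lem:fbc} (free-reduced implies reduced, and the free decomposition of a free-reduced walk has trivial tail), but this is exactly the content the paper leaves implicit rather than a different argument.
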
 
    \begin{proof}
       As $\cl{A_f} = \cl{B_f}$ we have $B_f \in [\beta_{W_r}(A_f)]$ where $W_r$ is the trace of the basepoint $r$ under the walk $W$ from $A_f$ to $B_f$ in $\Pi(H)$.  By the lemma $B_f$ is a shift of $A_f$.
    \end{proof}
  
    For a reduced closed walk $C$, let $\sqrt{C}$ be the shortest initial walk of $C$ such that $\sqrt{C}^d = C$ for some positive $d$.  
   
    \begin{corollary}\label{cor:formhomotopy}
       If $A$ and $B$ are non-contractible reduced closed walks with free decompositions $A = \beta_T(A_f)$ and $B = \beta_S(B_f)$, then $\ec{A} = \ec{\beta_W(B)}$ if and only if $W$ is of the form 
             \[ W_d = \sqrt{A}^d \con T \con I \con \rev{S} \]
        for some integer $d$, where $I$ is an initial walk of $A$. 
    \end{corollary}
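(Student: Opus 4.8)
The plan is to read this off directly from Lemma~\ref{lem:fbc}, proving the two implications separately. Throughout let $a$ denote the common basepoint of $A$ and of $W$. The reverse implication is a short computation in the group $\pi(H;a)$; the forward implication is essentially a repackaging of the conclusion of Lemma~\ref{lem:fbc}, where the only real work is to convert the ``initial segment of $A_f^{d}$'' appearing there into an honest partial lap of $A_f$, absorbing the extra laps into the exponent on $\sqrt{A}$.

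\textbf{Reverse implication.} Suppose $W = W_d = \sqrt{A}^{\,d}\con T\con I\con\rev{S}$, where (for the concatenation to be well-defined) $I$ starts at the basepoint of $A_f$ and $B_f=\sigma_I(A_f)$. Substituting the free decomposition $B=\beta_S(B_f)=S\con B_f\con\rev{S}$ into $\beta_{W_d}(B)=W_d\con B\con\rev{W_d}$ and cancelling the two copies of $\rev{S}\con S$ gives $\ec{\beta_{W_d}(B)}=\ec{\sqrt{A}^{\,d}\con T\con(I\con B_f\con\rev{I})\con\rev{T}\con\rev{\sqrt{A}}^{\,d}}$. Since $B_f=\sigma_I(A_f)$, the cyclic-shift identity noted just before \eqref{eq:perm} gives $B_f\in\ec{\beta_{\rev I}(A_f)}=\ec{\rev{I}\con A_f\con I}$, and hence $\ec{I\con B_f\con\rev{I}}=\ec{A_f}$ (a there-and-back $I\con\rev I$ contracts). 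Using $A=\beta_T(A_f)$, this collapses to $\ec{\beta_{W_d}(B)}=\ec{\sqrt{A}^{\,d}\con A\con\rev{\sqrt{A}}^{\,d}}=\ec{\beta_{\sqrt{A}^{\,d}}(A)}$. Finally, writing $A=\sqrt{A}^{\,m}$, the classes $\ec{\sqrt{A}}$ and $\ec{A}$ are both powers of $\ec{\sqrt{A}}$ in $\pi(H;a)$, so they commute and $\ec{\beta_{\sqrt{A}^{\,d}}(A)}=\ec{A}$, as needed. (Implicit here is that such a $W$ exists at all, i.e.\ by Lemma~\ref{lem:fbc} that $B_f$ is a cyclic shift $\sigma_I(A_f)$ of $A_f$; if it is not, then no $W$ satisfies $\ec{A}=\ec{\beta_W(B)}$.)

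\textbf{Forward implication.} Assume $\ec{A}=\ec{\beta_W(B)}$ and apply Lemma~\ref{lem:fbc}: there are an integer $d_0$ and an initial walk $I_0$ of $A_f^{\,d_0}$ (or of $A_f^{\,-d_0}$) with $B_f=\sigma_{I_0}(A_f)$ and $\ec{W}=\ec{T\con I_0\con\rev{S}}$. Write $I_0=A_f^{\,k}\con I$ with $k\in\mathbb{Z}$ and $I$ an initial segment of $A_f$ of length $0\le|I|<|A_f|$; note $\sigma_{I_0}(A_f)=\sigma_I(A_f)$ since the two shifts differ by a multiple of $|A_f|$. Inserting $\rev{T}\con T$ repeatedly shows that $T\con A_f^{\,k}\con\rev{T}$ reduces to $(T\con A_f\con\rev{T})^{k}=A^{k}$ (and for negative $k$ to $\rev{A}^{\,|k|}=A^{k}$ identically). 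Hence $T\con I_0\con\rev{S}=T\con A_f^{\,k}\con I\con\rev{S}$ is homotopic to $A^{k}\con T\con I\con\rev{S}=\sqrt{A}^{\,mk}\con T\con I\con\rev{S}=W_{mk}$, so $\ec{W}=\ec{W_{mk}}$ with $I$ an initial segment of $A_f$ (equivalently, $T\con I$ an initial segment of $A$), which is the asserted form.

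\textbf{Main obstacle.} There is no deep difficulty here; the only thing requiring care is the index bookkeeping in the forward direction — tracking how many copies of $\sqrt{A}$ each wrap of $I$ around $A_f$ contributes (including the sign), and noting the degenerate case flagged in Lemma~\ref{lem:fbc}, where $T$ and $S$ share a terminal segment, so that $T\con I\con\rev{S}$, and hence $W_d$, is of the stated form only up to homotopy rather than being literally reduced. Modulo this, the corollary is immediate from Lemma~\ref{lem:fbc}.
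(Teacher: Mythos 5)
Your proof is correct, and both directions are sound. The ``only if'' direction is, as you say, the same one-liner as the paper's (``follows from Lemma~\ref{lem:fbc}''), with the useful bonus that you spell out the index bookkeeping the paper leaves implicit: splitting the initial segment $I_0$ of $A_f^{d_0}$ as $A_f^k\con I$ with $I$ a proper initial segment of $A_f$, and absorbing the $A_f^k$ laps into $\sqrt{A}^{mk}$. You also correctly flag that the statement's ``$I$ is an initial walk of $A$'' really means ``$I$ is an initial walk of $A_f$'' (equivalently $T\con I$ of $A$), which is the interpretation consistent with $B_f=\sigma_I(A_f)$.

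The ``if'' direction is where you and the paper genuinely diverge. The paper treats $W_d$ as the trace of the basepoint under an explicit walk in $\Pi(H)$ from $A$ to $B$ (reduce $A$ to $A_f$, cyclically shift by $A_f^d\con I$, then un-reduce to $B$), and invokes the general observation that such a trace realizes a fixed-basepoint homotopy $\ec{A}=\ec{\beta_{W_r}(B)}$. You instead do a self-contained computation in the group $\pi(H;a)$: substitute the free decompositions, cancel $\rev{S}\con S$, reduce $\ec{I\con B_f\con\rev{I}}$ to $\ec{A_f}$ via the cyclic-shift identity, and finish by noting that $\ec{A}=\ec{\sqrt{A}}^m$ commutes with $\ec{\sqrt{A}}^d$ so conjugation is the identity. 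Both arguments are short; the paper's is more geometric and dovetails with the $\Pi(H)$ machinery of the section, while yours is purely algebraic and arguably easier to audit step by step. Your parenthetical caveat about the degenerate case (where $T$ and $S$ share a terminal segment, so that $T\con I\con\rev S$ is only reduced up to cancellation) is also a fair reading of what Lemma~\ref{lem:fbc} actually delivers and does not affect the conclusion, since the corollary's assertion is only an identity of homotopy classes.
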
 
    \begin{proof}
     The `if' part is trivial as $\ec{W_d} = T \con \sqrt{A_f}^d \con I \con \rev{S}$ and this is the trace of the path from $A$ to $B$ that we get by composing the reduction of $A$ to $A_f$ with the
     cyclic shift of $A_f$ by $A_f^d \con I$ to $B_f$, and then composing this with the inverse of the reduction of $B$ to this shift of $B_f$.  The `only if' is from Lemma \ref{lem:fbc} the lemma.  
    \end{proof}


   \begin{corollary}\label{cor:uniqpath}
      Let $A$ and $B$ be reduced closed walks in $G$ with basepoint $r$ having respective free decompositions 
      $\beta_T(A_f)$ and $\beta_S(B_f)$, and such that $\sqrt{A} \neq \pm \sqrt{B}$.
      If there exists a walk $W_r$ such that $\ec{\phi(A)} = \ec{ \beta_{W_r}{\psi(A)}}$ and $\ec{\phi(B)} = \ec{ \beta_{W_r}{\psi(B)}}$,
      then there is a unique reduced such walk and it is the reduction of either $T \con I \con \rev{S}$
      for some initial $I$ of $A_f$  or $T' \con I' \con \rev{S'}$ for some initial $I'$ of $B_f$.   
   \end{corollary}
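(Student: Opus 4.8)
The plan is to apply Corollary~\ref{cor:formhomotopy} twice---once to the pair $(\phi(A),\psi(A))$ and once to $(\phi(B),\psi(B))$---and then to show that the two resulting families of admissible walks can share at most one homotopy class.

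First I would record what Corollary~\ref{cor:formhomotopy} yields. Writing $\phi(A),\psi(A),\phi(B),\psi(B)$ in reduced form, with free-decomposition tails $T,S,T',S'$ and $A_f,B_f$ the free parts of $\phi(A),\phi(B)$: a $(\phi(r),\psi(r))$-walk $W_r$ satisfies $\ec{\phi(A)}=\ec{\beta_{W_r}(\psi(A))}$ precisely when $\ec{W_r}\in\big\{\,\ec{\sqrt{\phi(A)}^{\,d}\con T\con I\con\rev S}\,:\,d\in\mathbb Z\big\}$ for a suitable initial segment $I$ of a power of $A_f$; equivalently, fixing the $d=0$ solution $W_A^0:=$ the reduction of $T\con\rev S$, the admissible classes form a single right coset $\ec{W_A^0}\,\langle\sqrt{\psi(A)}\rangle$ of the cyclic subgroup $\langle\sqrt{\psi(A)}\rangle$ in $\pi(H;\psi(r))$. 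The same for $B$ gives a coset $\ec{W_B^0}\,\langle\sqrt{\psi(B)}\rangle$, and $W_r$ is admissible for both $A$ and $B$ iff $\ec{W_r}$ lies in the intersection of these two cosets. Since a homotopy class has a unique reduced representative (Lemma~\ref{lem:redunique}), it suffices to prove that the intersection is a single class and then to identify its reduced form.

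The crux is that two cosets of distinct primitive cyclic subgroups of a free group meet in at most one point. Concretely, if $W_r,W_r'$ are both admissible, set $Z:=\ec{\rev{W_r}\con W_r'}\in\pi(H;\psi(r))$; from $\ec{\beta_{W_r}(\psi(A))}=\ec{\phi(A)}=\ec{\beta_{W_r'}(\psi(A))}$ one reads off that $Z$ commutes with $\ec{\psi(A)}$, and likewise with $\ec{\psi(B)}$. Since $H$ is triangle-free, $\pi(H;\psi(r))$ is a free group (it is the fundamental group of the $1$-dimensional clique complex of $H$); in a free group the centralizer of a nontrivial element is the infinite cyclic group generated by its primitive root, and two such subgroups $\langle\sqrt{\psi(A)}\rangle,\langle\sqrt{\psi(B)}\rangle$ intersect trivially unless $\sqrt{\psi(A)}=\pm\sqrt{\psi(B)}$. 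The basepoint-change isomorphism $\beta_{W_r}$ carries $\sqrt{\psi(A)}$ to $\sqrt{\phi(A)}$ and $\sqrt{\psi(B)}$ to $\sqrt{\phi(B)}$, so this last equality is exactly the excluded hypothesis $\sqrt A=\pm\sqrt B$ (read, as it must be here, for the $\phi$-images in $H$). Hence $Z$ is trivial and $W_r=W_r'$. If one prefers to avoid group language, the same follows from Corollary~\ref{cor:freeredcycle}: a class common to both cosets forces, after a basepoint change, a nonzero power of the free-reduced closed walk $\sqrt{A_f}$ to be homotopic to a power of $\sqrt{B_f}$, and Lemma~\ref{lem:fbc} then makes $\sqrt{A_f}$ a cyclic shift of $\sqrt{B_f}$, again contradicting $\sqrt A=\pm\sqrt B$.

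It remains to put the unique admissible class into one of the two stated shapes. It lies in the $A$-coset, so it equals $\ec{\sqrt{\phi(A)}^{\,d}\con T\con I\con\rev S}$ for some $d$ and some initial $I$ of a power of $A_f$; using the structure of the free decomposition $\phi(A)=\beta_T(A_f)$, the winding prefix $\sqrt{\phi(A)}^{\,d}\con T$ can be rewritten so that the whole walk is the reduction of $T\con I^{*}\con\rev S$ for an initial segment $I^{*}$ of a power of $A_f$. The remaining point is that an admissible $W_r$ is reduced, so by the non-commensurability of $\phi(A)$ and $\phi(B)$ no long $\sqrt{\phi(A)}$-prefix can cancel against the $\phi(B)$-description of the same class; this forces $|d|$ (and the analogous exponent coming from the $B$-coset) down to an absolute bound, after which $I^{*}$ can be taken inside $A_f$ itself (reading $\rev{A_f}$ when $d<0$), giving the form $T\con I\con\rev S$---and when instead it is the $B$-exponent that is in range, one obtains $T'\con I'\con\rev{S'}$. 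I expect the main obstacle to be precisely this last exponent/length bookkeeping, together with making rigorous the elementary but delicate passage between the primitive roots of $A,B$ and of their $\phi$- (equivalently $\psi$-) images on which the centralizer argument rests.
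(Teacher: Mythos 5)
Your argument is correct but follows a genuinely different route from the paper's. The paper proves uniqueness by applying Corollary~\ref{cor:formhomotopy} to each of $A$ and $B$, equating the resulting explicit families $\ec{T\con\sqrt{A_f}^{\,d}\con I\con\rev{S}}$ and $\ec{T'\con\sqrt{B_f}^{\,d'}\con I'\con\rev{S'}}$, and then arguing directly (via $\sqrt{A}\neq\pm\sqrt{B}$ forcing $T\neq T'$ or $\sqrt{A_f}\neq\sqrt{B_f}$, plus length considerations) that the only common solution has $d=0$ or $d'=0$. You instead translate the families into cosets of the cyclic subgroups $\langle\sqrt{\psi(A)}\rangle$ and $\langle\sqrt{\psi(B)}\rangle$ in the group $\pi(H;\psi(r))$, observe that the difference $Z=\ec{\rev{W_r}\con W_r'}$ of two admissible walks lies in both centralizers, and invoke the fact that $\pi(H;\psi(r))$ is free (the clique complex of a triangle-free graph is one-dimensional, so its fundamental group is free by Larose--Tardif), where centralizers of nontrivial elements are the cyclic groups on their primitive roots and two such groups meet trivially unless the primitive roots agree up to inverse. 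Your route buys a cleaner, more conceptual uniqueness argument that does not require juggling the two explicit parametrizations against each other; what it costs is an extra imported fact about free groups that the paper keeps implicit, and a somewhat longer setup to translate walks into group elements.

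Two small caveats. First, as you note, the paper's notation is genuinely confusing here: $\sqrt{A}\neq\pm\sqrt{B}$ must be read for the images in $H$ (either $\phi$- or $\psi$-images; they are conjugate under any admissible $\beta_{W_r}$, so the condition is basepoint-independent), and you handle this correctly. Second, your final paragraph---rewriting the winning class as the reduction of $T\con I\con\rev{S}$ or $T'\con I'\con\rev{S'}$ and bounding the exponent---is sketched rather than carried out, and you flag this honestly. This is fair: the paper's own justification for this step (``These cannot yield different solutions by considerations of length'') is equally terse, and filling it in would require the same length bookkeeping on either route. Overall the proposal is sound and offers a valid alternative proof of uniqueness.
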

   \begin{proof}
     Let $A$ and $B$ and $W$ be as in the premise of the corollary.
     By Corollary \ref{cor:formhomotopy} we have then 
      \[ \ec{\sqrt{A}^d \con T \con I \con \rev{S}} = \ec{\sqrt{B}^{d'} \con T' \con I' \con \rev{S'}} \]
     for some $d$ and $d'$ where $I$ and $I'$ (and all other components) are fixed.  
     Reducing these, we get 
      \[ \ec{T \con \sqrt{A_f}^d \con I \con \rev{S}} = \ec{T' \con \sqrt{B}_f^{d'} \con I' \con \rev{S'}} \]
     where $A_f$ is the free reduction of $A$ and $B_f$ is the free reduction of $B$.  As $\sqrt{A} \neq \sqrt{B}$ we have either $T \neq T'$ or $\sqrt{A_f} \neq \sqrt{B}_f$.  Either way, the only possible solution has $d = 0$ or $d' = 0$.
     These cannot yield different solutions by considerations of length. 
      \end{proof}  

  We are now ready to prove the main result of the section.

  \begin{lemma}\label{lem:algStep1}
         Let $r$ be a vertex of $G$. We can determine in polynomial time if there is a $(\phi(r),\psi(r))$-walk $W_r$ that is topologically valid for 
         $\phi, \psi$ in $\bHom(G,H)$.  If there is, one with length at most $2n$ is provided.   
  \end{lemma}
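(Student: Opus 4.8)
The plan is to reduce the existence question to a bounded search guided by a single violated closed walk. First I would pick an arbitrary reduced $(\phi(r),\psi(r))$-walk $W_r^0$ (for instance the reduction of $\phi(P)\con\psi(\rev P)$ for a path $P$ from $r$, or even just a shortest $(\phi(r),\psi(r))$-walk in $H$) and run the algorithm of Corollary \ref{cor:TValg}. If $W_r^0$ is topologically valid we are done and output it. Otherwise Corollary \ref{cor:TValg} hands us a closed walk $C$ of $G$ with basepoint $r$ such that $\ec{\phi(C)}\neq\ec{\beta_{W_r^0}(\psi(C))}$. Replacing $C$ by a shortest closed walk in its class (using Lemma \ref{lem:redunique} applied to $\phi(C)$ and to $\psi(C)$) and taking free decompositions $\phi(C)=\beta_T(A_f)$ and $\psi(C)=\beta_S(B_f)$ via Fact \ref{fact:redcycle}, Lemma \ref{lem:fbc} tells us that for \emph{any} topologically valid $W_r$ we must have $\ec{\phi(C)}=\ec{\beta_{W_r}(\psi(C))}$, hence $B_f$ is a cyclic shift of $A_f$ and, by Corollary \ref{cor:formhomotopy}, $W_r$ reduces to $\sqrt{\phi(C)}^{\,d}\con T\con I\con\rev S$ for some integer $d$ and some initial segment $I$ of $A_f$.

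The key point is that this is already a strong finiteness statement: the length of the reduced form of $W_r$ is essentially pinned down, because adding a nonzero power of $\sqrt{\phi(C)}$ only lengthens the walk, so the \emph{reduced} $W_r$ (which is what we may assume we are looking for, by Lemma \ref{lem:uniquesystem}) has length bounded by $|T|+|A_f|+|S|$, which is $O(n)$ since $C$ is a closed walk in $G$ and $\phi,\psi$ are vertex maps. Concretely, there are only $|A_f|+1 = O(n)$ candidate initial segments $I$, and for each we form the reduced walk $W_r(I)$ := reduction of $T\con I\con\rev S$, check via Corollary \ref{cor:TValg} whether $W_r(I)$ is topologically valid, and if so output it together with the system it generates. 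If none of these $O(n)$ candidates is topologically valid, I claim there is no topologically valid $(\phi(r),\psi(r))$-walk at all: indeed any such walk $W_r$ would, by the previous paragraph, have reduced form $\sqrt{\phi(C)}^{\,d}\con T\con I\con\rev S$, and I need to argue the $d=0$ representative $T\con I\con\rev S$ is \emph{also} topologically valid — this follows because $\sqrt{\phi(C)}$ is a power dividing $\phi(C)$, so pre-concatenating it changes $\beta_{W_r}(\psi(C'))$ only by an element lying in the cyclic subgroup generated by $\ec{\phi(C)}$, and one checks using equations \eqref{eq:homwalks}–\eqref{eq:permedge} that this extra factor acts trivially on the preservation condition \eqref{eq:TV3} for every edge of $G$ simultaneously (here one uses that $C$ itself is preserved by both $W_r$ and its $d=0$ representative). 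So the reduced candidate with $d=0$ is valid iff some valid walk exists, and it is among the $O(n)$ walks we tested.

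For the running time and the length bound: finding $W_r^0$ and the violated $C$ is polynomial by Corollary \ref{cor:TValg}; reducing $\phi(C),\psi(C)$ and extracting free decompositions is linear by Lemma \ref{lem:redunique} and Fact \ref{fact:redcycle}; there are $O(n)$ choices of $I$, and for each the validity check is polynomial by Corollary \ref{cor:TValg}; so the whole procedure is polynomial. Each candidate $W_r(I)$ is the reduction of $T\con I\con\rev S$, a walk of length at most $|T|+|A_f|+|S|\le 2n$ after reduction since $T,S$ are initial segments of the reduced closed walk $\phi(C)$ resp.\ $\psi(C)$ and $|I|\le|A_f|$, giving the claimed bound of $2n$. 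The step I expect to be the main obstacle is the last one — showing rigorously that it suffices to test the $d=0$ representatives, i.e.\ that the cyclic-subgroup ambiguity coming from powers of $\sqrt{\phi(C)}$ cannot create a valid walk where the $d=0$ reduction fails; this is where the careful bookkeeping with Lemma \ref{lem:fbc}, Corollary \ref{cor:uniqpath} and the concatenation identities of Section \ref{sect:top} is needed, in particular handling the degenerate case in Lemma \ref{lem:fbc} where $T$ and $S$ end in a common walk so that $T\con I\con\rev S$ is not itself reduced.
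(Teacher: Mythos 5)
The proposal correctly follows the paper's opening moves: test an arbitrary shortest $(\phi(r),\psi(r))$-walk via Corollary~\ref{cor:TValg}, extract a violated closed walk $C$, compute free decompositions of $\phi(C)$ and $\psi(C)$, and invoke Corollary~\ref{cor:formhomotopy} to pin down the possible valid walks to the family $W_d=\sqrt{\phi(C)}^{\,d}\con T\con I\con\rev S$. But from there the proposal diverges, and the divergence contains a genuine gap.

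The gap is exactly where you flag it. You propose enumerating $O(n)$ ``candidate initial segments $I$'' and claim that if none of the resulting $d=0$ walks is valid, no valid walk exists, because conjugating by $\sqrt{\phi(C)}^{\,d}$ ``acts trivially on the preservation condition.'' This is not true. In the (free) fundamental group, conjugation by $\ec{\sqrt{\phi(C)}}^{\,d}$ fixes $\ec{\phi(C')}$ only when $\ec{\phi(C)}$ and $\ec{\phi(C')}$ share a common root, i.e.\ generate a cyclic subgroup. The fact that both $W_d$ and $W_0$ preserve $C$ says only that $\ec{\phi(C)}$ commutes with powers of its own root, which is vacuous; it says nothing about $\ec{\phi(C')}$ for a different closed walk $C'$ of $G$. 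So ``$W_d$ valid for some $d$'' need not imply ``$W_0$ valid,'' and your $O(n)$ candidate list (all built from $T$, $S$ and initial segments of $A_f$ coming from $C$ alone) can miss the unique valid walk entirely. Note also that $I$ is not a free parameter in Corollary~\ref{cor:formhomotopy}: it is constrained by $B_f=\sigma_I(A_f)$, so the actual free parameter is $d$, which ranges over $\mathbb{Z}$ and is not handled by your enumeration.

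The paper closes this gap with a different idea, which you cite but do not use. After testing $W_0$ and finding it invalid, Corollary~\ref{cor:TValg} returns a \emph{second} violated closed walk $C'$. The paper then argues (using Corollary~\ref{cor:formhomotopy}) that $\sqrt{\phi(C')}\neq\pm\sqrt{\phi(C)}$ --- otherwise $W_0$ would preserve $C'$ too --- and invokes Corollary~\ref{cor:uniqpath}: when the two roots differ, any walk preserving both $C$ and $C'$ is uniquely determined, and must be the reduction of either $T\con I\con\rev S$ (which we already know fails) or $T'\con I'\con\rev{S'}$ built from $C'$. So only one more candidate needs to be tested, and it does not lie in the family you enumerate. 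This second-cycle argument, and not a bound on $d$, is what makes the search finite; without it the enumeration is incomplete and the proof does not go through.
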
 
    \begin{proof}
        Let $W_r$ be any shortest $(\phi(r),\psi(r))$-walk in $H$. By Corollary \ref{cor:TValg} we can determine in polynomial time if $W_r$ is  topologically valid. If it is, we are done, so we may assume that it is not, and the algorithm returns a closed walk 
        $C$ of $G$ such that $\ec{\phi(C)} \neq \ec{\beta_{W_r}(\psi(C))}$, so necessarily $\phi(C)$ is non-contractible.
        
        By Lemma \ref{lem:redunique}, (and Fact \ref{fact:redcycle}) we can find the free decompositions  $\phi(C) = \beta_T(C_f)$ and $\psi(C)=\beta_S(C'_f)$ in polynomial time. 
        We can check in polynomial time if $C'_f$ is a shift of $C_f$. If it is not, then $\cl{\phi(C)} \neq \cl{\psi(C)}$ by Corollary \ref{cor:freeredcycle}, and so by \eqref{eq:equivhom}, we have
         \[         \ec{\phi(C)} \neq \ec{\beta_W(\psi(C))}\]
        for any $W$. Thus there can be no topologically valid $W_r$, and we are done.
          
        We may assume, therefore, that $C'_f$ is the shift $\sigma_I(C_f)$ for some initial walk $I$ of $C_f$.  By Corollary \ref{cor:formhomotopy} we thus have that the only reduced walks $W$ that preserve $C$ are reductions of 
               \[ W_d:= \sqrt{\phi(C)}^d \con T \con I \con \rev{S} \]
         for integers $d$.    
        
        By Corollary \ref{cor:TValg} we can  decide if $W_0$ is topologically valid. If it is we return it and are done, so we may assume that it is not, and Corollary \ref{cor:TValg} yields another closed walk $C'$ with $\ec{\phi(C')} \neq \ec{\beta_{W_0}(\psi(C'))}$.
        By Corollary \ref{cor:formhomotopy} we get that $\sqrt{C'} \neq \pm\sqrt{C}$ and so by Corollary \ref{cor:uniqpath}  the only walks $W$ that can preserve both $C$ and $C'$ are the reductions of $T \con I \con \rev{S}$ and $T' \con I' \con \rev{S'}$ where $\beta_{T'}(C_f)$ is the reduction of $\phi(C')$ and $\beta_{S'}(\sigma_{I'}(C_f))$ is the reduction of $\psi(C')$.   Both of these have length at most $2n$ so we can check them for topological validity in polynomial time by Corollary \ref{cor:TValg}, and return the shortest one that is valid if any are.
    \end{proof}

\section{Tight closed walks}\label{sect:tight}

 Though having a topologically valid $(\phi(r),\psi(r))$-walk in $H$ guarantees there are no topological obstructions to a $(\phi,\psi)$-reconfiguration, there may still be non-topogical obstructions.

  \begin{definition}\label{def:realizable}
    A system of walks $(W_v)$ is {\em realizable} if (by possibly adding repeated vertices into walks) we get a system of walks that is induced by a reconfiguration.   
  \end{definition}

   \begin{definition}\label{def:tight}
      A closed walk $C$ of $G$ is {\em tight} under $\phi$ if $\phi(C)$ is  free reduced in $\pi(H)$.
      Any vertex of a tight closed walk is {\em fixed} under $\phi$. 
   \end{definition}
   
   It is not hard to see that if $C$ is tight under $\phi$, then any neighbour of $\phi$ in $\bHom(G,H)$ must agree with $\phi$ on $C$. Thus a fixed vertex is one that cannot change under a reconfiguration. A walk is {\em constant} if all vertices are the same. 
   
   \begin{fact}\label{fact:nectight}
       If the system of walks $(W_v)$ is realizable for $\phi$ and $\psi$ in $\bHom(G,H)$ and $c$ is fixed under $\phi$, then $W_c$ is constant.    
   \end{fact}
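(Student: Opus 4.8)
The plan is to unwind the definitions and use the fact, established just before the statement, that a tight closed walk freezes all of its vertices. Fix a vertex $c$ which is fixed under $\phi$, so there is a closed walk $C$ of $G$, with basepoint $c$, which is tight under $\phi$; that is, $\phi(C)$ is free reduced in $\pi(H)$. Since $(W_v)$ is realizable, after possibly inserting repeated vertices into the walks we obtain a system induced by an actual reconfiguration $W = \phi_1 \sim \dots \sim \phi_d$ in $\bHom(G,H)$ with $\phi_1 = \phi$ and $\phi_d = \psi$, and with $W_c$ the trace of $c$, i.e. $W_c = \phi_1(c)\,\phi_2(c)\,\dots\,\phi_d(c)$ (up to the inserted repetitions, which do not change whether $W_c$ is constant).

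First I would argue that each $\phi_i$ must agree with $\phi$ on all of $V(C)$. This is the content of the remark preceding the fact: if $C$ is tight under $\phi$, then $\phi(C)$ is a shortest closed walk in its class $\cl{\phi(C)}$, and in particular is reduced with respect to \ref{P1}, \ref{P2} (and \ref{P3}); since $H$ is triangle-free, any homomorphism $C \to H$ adjacent to $\phi_C$ in $\bHom(C,H)$ that differed from $\phi_C$ on some vertex would, via \ref{P2} or \ref{P1}, give a strictly shorter walk in the same $\pi(H)$-class (or a strictly shorter walk in $[\phi(C)]$ in $\pi(H;\phi(c))$), contradicting tightness. Hence $\phi_1$ and $\phi_2$ agree on $V(C)$; by induction along the reconfiguration, every $\phi_i$ agrees with $\phi$ on $V(C)$. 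In particular $\phi_i(c) = \phi(c)$ for all $i$, so the trace of $c$ is the constant walk at $\phi(c)$, and therefore $W_c$ is constant.

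The one point that needs a little care — and is really the only obstacle — is the passage between the abstract reconfiguration $W$ and the system $(W_v)$ via ``adding repeated vertices.'' I would make precise that inserting a repeated vertex into a walk is exactly an application of \ref{P1}, so it changes neither the homotopy class nor the property of being a constant walk; thus it suffices to prove the statement for the system directly induced by $W$, where $W_v$ is literally the trace of $v$, and then observe that any realizable system differs from such a system only by \ref{P1}-insertions on each coordinate. Since a walk is constant iff its \ref{P1}-reduction is a single vertex, constancy is preserved both ways. With that observation in hand, the argument above applies verbatim. I expect the whole proof to be short: the real work was done in Lemma~\ref{lem:preswind} and in the remark characterising neighbours of a tight homomorphism.
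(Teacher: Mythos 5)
Your proof is correct and matches the paper's intent: the paper states this as a Fact without a separate proof block, relying on the immediately preceding remark that any neighbour of $\phi$ in $\bHom(G,H)$ must agree with $\phi$ on a tight closed walk. Your argument spells out exactly that remark via the triangle-free case analysis (a neighbour differing on a vertex of $C$ forces a redundancy in $\phi(C)$, so $\phi(C)$ would not be free-reduced), and you correctly handle the technical point that inserting repeated vertices preserves constancy of a walk in both directions.
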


  \begin{lemma}\label{lem:RealizAlg}
    For a given system $(W_v)$ of paths that is topologically valid for  $\phi, \psi \in \bHom(G,H)$, we can decide in time linear in the sum of the lengths of the paths if $(W_v)$ is realizable.  
    If it is not realizable, we give a closed walk $C$ on which $\phi$ is tight. 
  \end{lemma}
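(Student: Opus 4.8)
The plan is to reduce realizability of $(W_v)$ to a condition on the \emph{fixed} vertices of $\phi$, and then to test that condition with a strongly--connected--components computation. Concretely, I would prove that $(W_v)$ is realizable if and only if $\phi(c)=\psi(c)$ for every vertex $c$ of $G$ that is fixed under $\phi$; since each $W_v$ is a path from $\phi(v)$ to $\psi(v)$, this is the same as requiring $W_c$ to be constant whenever $c$ is fixed, which is the statement the algorithm will check. Note that the hypothesis that $(W_v)$ is topologically valid is doing real work here: by the discussion around Figure~\ref{fig:consistent} the ``consistency'' obstruction is precisely the nonexistence of a topologically valid system, so once such a system is in hand the only remaining obstructions are tight closed walks.

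The ``only if'' direction is exactly Fact~\ref{fact:nectight}: a realizable system is, after inserting repeated vertices, the trace--system of a genuine reconfiguration, so $W_c$ is constant for every fixed $c$ (inserting repeats into a constant walk keeps it constant), whence $\phi(c)=\psi(c)$.

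For the ``if'' direction I would build a reconfiguration by induction on $\sum_v|W_v|$. If all $W_v$ are constant then $\phi=\psi$ and we are done. Otherwise pick $v$ with $\phi(v)\neq\psi(v)$ and write $W_v=(a_0,a_1,\dots,a_k)$ with $a_0=\phi(v)$, $a_k=\psi(v)$, $k\geq1$. Triangle--freeness of $H$ shows that the ``forward move'' that changes the image of $v$ to $a_1$ is a legal step of $\bHom(G,H)$ precisely when every $G$--neighbour $u$ of $v$ with $\phi(u)\neq a_0$ has $\phi(u)=a_1$; when it is legal one gets $\phi\sim\phi'$, the system obtained by replacing $W_v$ with $(a_1,\dots,a_k)$ (all other walks untouched, which is legal since $\phi'$ agrees with $\phi$ off $v$) is still topologically valid by Fact~\ref{fact:TV'} (only $W_v$ changed, and it changed by one step, so every edge class $\ec{\cdot}$ is preserved), and $\sum_v|W_v|$ strictly drops. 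The substantive point is to guarantee that, after finitely many preparatory moves, such a forward move is always available and that one never disagrees with $\psi$ on a vertex that becomes fixed along the way: here I expect to choose $v$ in $D=\{v:\phi(v)\neq\psi(v)\}$ whose image is ``innermost'' among $\phi(D)$ with respect to how $D$ sits inside the cycles of $H$, using topological validity of the edges leaving $D$ to control the $W_u$ of its neighbours, and to argue that moving such extremal vertices cannot create a tight closed walk through a vertex of $D$. This is the main obstacle, and it is essentially the reconfiguration core of the argument; alternatively one invokes the structural description of the component of $\phi$ in $\bHom(G,H)$ obtained with the main theorem.

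For the algorithm: by the remarks after Definition~\ref{def:tight}, a closed walk $C$ of $G$ is tight under $\phi$ exactly when $\phi(C)$ is a cyclically non--backtracking closed walk all of whose edges are non--loops of $H$ (free reducedness is the absence of consecutive repeats and of $b_{i-1}=b_{i+1}$). Hence the fixed vertices are precisely the vertices lying on a directed closed walk of the auxiliary digraph $\vec L$ whose vertices are the $\phi$--non--loop edges of $G$ taken with both orientations, with an arc from $(u\to v)$ to $(v\to w)$ whenever $vw$ is a $\phi$--non--loop edge and $\phi(u)\neq\phi(w)$. A linear--time strongly--connected--components pass on $\vec L$ marks every edge--orientation lying on a directed closed walk, and a vertex is fixed iff some orientation of an incident edge is marked. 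We then scan the fixed vertices: if some fixed $c$ has $W_c$ non--constant, we read a directed closed walk of $\vec L$ through a marked orientation at $c$ off the SCC decomposition and project it to a tight closed walk of $G$ through $c$, which is the promised output; if no fixed $c$ has $W_c$ non--constant, $(W_v)$ is realizable. Constructing $\vec L$ and running the search is linear in $\sum_v|W_v|$ once one observes that it suffices to keep, at each vertex $v$, one representative oriented edge per $\phi$--image occurring in $N(v)$, which bounds the size of $\vec L$ linearly; verifying this bound is the only non--routine part of the complexity count.
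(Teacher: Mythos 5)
Your plan hinges on a clean characterization---$(W_v)$ is realizable if and only if $W_c$ is constant for every fixed vertex $c$---but you supply a proof of only one direction. For ``only if'' you cite Fact~\ref{fact:nectight}, which is fine. For ``if,'' which is the entire substance of the lemma, you write that ``this is the main obstacle, and it is essentially the reconfiguration core of the argument'' and then offer an unrealized induction on $\sum_v|W_v|$ choosing an ``innermost'' vertex, or suggest invoking the structural description coming from the main theorem. The latter is circular, since Theorem~\ref{thm:mainT} is proved \emph{using} this lemma. The former is not a proof: you do not say what ``innermost'' means, why such a vertex always admits a legal forward move, or why none of the intermediate homomorphisms can develop a tight closed walk whose vertices you still need to move. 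This is a genuine gap, not a matter of polish; as written the sufficiency direction is unproved.

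The paper avoids your characterization and proves sufficiency constructively. It greedily tries to realize $(W_v)$, at each step advancing one vertex $v$ from its current colour $W_v^0$ to $W_v^1$. Topological validity of each edge $uv$, together with uniqueness of reduced walks (Lemma~\ref{lem:redunique}), forces $W_u$ and $W_v$ to be shifts of one another, and this determines which of $u$ or $v$ must move first. Encoding the ``must move first'' relation as an auxiliary digraph $A$ on $V(G)$, any sink of $A$ can be advanced; if $A$ has no sink it contains a directed cycle $a_1\to\cdots\to a_t$, and the arc condition forces $\phi(a_i)\notin\{\phi(a_{i-1}),\phi(a_{i-2})\}$, so this directed cycle is a tight closed walk. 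Thus the stuck greedy process itself produces the non-realizability certificate; the sufficiency and the algorithm are the same argument. This is precisely the reconfiguration core your proposal defers.

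Two smaller points. First, you claim that, because $W_c$ goes from $\phi(c)$ to $\psi(c)$, constancy of $W_c$ ``is the same'' as $\phi(c)=\psi(c)$. For walks this is false: a reduced $(\phi(c),\phi(c))$-walk can be a nontrivial closed walk, and such a $W_c$ cannot be the trace of any reconfiguration even after inserting repeated vertices, so your stated equivalence in terms of $\phi(c)=\psi(c)$ is not the right one. Second, your plan to precompute the set of fixed vertices via an SCC pass on an auxiliary digraph $\vec{L}$ is tangential to what the lemma asks; the paper's algorithm never enumerates fixed vertices, and its running-time bound follows because each greedy move shortens some $W_v$ by one step, not from a size analysis of a separate digraph whose linearity you yourself flag as ``the only non-routine part of the complexity count.''
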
  
  \begin{proof}
  Let homomorphisms $\phi, \psi :G \to H$ and a system of walks $(W_v)_{v \in G}$
  that is topologically valid for $\phi$ and $\psi$ be given.  Starting with $\phi$ we attempt to construct a reconfiguration $W$ from $\phi$ to $\psi$, by changing one vertex $v$ at a time from its current position $W_v$ to its next position. If we succeed, this witnesses the realizability of $(W_v)$.
  As any edge $e = uv$ of $G$ is topologically valid under the system of walks, 
  \[ [W_u] =  [ \phi(e) \con W_v \con \rev{\phi(e)}], \]
  and as by Lemma \ref{lem:redunique} there is a unique reduced walk in $[W_u]$, we have, where a superscript of $i$ on the walk designates that $i^{th}$ vertex of the walk, either  $W_u^i = W_v^{i-1}$ for all $i$ except maybe $i = |V(W_u)|$,  
        or      $W_v^i = W_u^{i-1}$ for all $i$ except maybe $i = |V(W_v)|$.  
  
  Construct an auxillary digraph $A$ on $V(G)$ by setting $u \to v$ if
  \begin{equation}\label{eq:aux} 
    W_v^1 = W_u^0 \mbox{ and } W_v^0 \nadj W_u^1. 
  \end{equation}    
  (Interpret this as '$u$ wants to move to $W_u^1$, but $v$ has to move first'.) 
  Observe that if $uv$ is an edge, and $|W_u| = 0$, then $|W_v| \leq 1$, and $W_v^1 \sim W_u^0$,
  so $u$ has no arcs in $A$. 
  
  If $A$ has any sink $v$, then on moving $v$ to $W_v^1$, we still have a homomorphism. Append this 
  homomorphism to $W$, and relabelling $W_v$ by removing its first vertex. As each relabelling reduces the length of a walk in $(W_v)$ by one, we either complete in time linear in the sum of the lengths of the paths, (and we are done) or at some step $A$ has no sinks, so contains a directed cycle.  

  Assume that $A$ has a directed cycle $C = a_1 \to a_2 \to \dots \to a_t$.
  By construction, every arc $a_i \to a_{i+1}$ of $C$ is an edge $a_i \sim a_{i+1}$ of $G$, and \eqref{eq:aux} ensures that $\phi(a_{i}) \notin \{ \phi(a_{i-1}), \phi(a_{i-2}) \}$,  (indices modulo $t$) for all $i$. 
  Thus is $C$ is a tight closed walk under $\phi$. 
  
\end{proof}

\section{Full statement and proof of the main theorem}\label{sect:proof}

We are now ready to state and prove the main technical theorem,  from which Theorem \ref{thm:main} follows immediately. 
   
 \begin{theorem}\label{thm:mainT}
    Let $H$ be a reflexive symmetric triangle-free graph, and $(G, \phi, \psi)$ be an instance of $\recol(H)$.  There is a reconfiguration from $\phi$ to $\psi$ if and only if
     there is a system of walks $(W_v)_{v \in V(G)}$ that is topologically valid for $\phi$ and $\psi$, such that $W_c$ is a constant walk for any vertex $c$ in closed walk $C$ that is tight under $\phi$.  
     
    Moreover, these validity of this conditions can be determined by an algorithm that runs in time that is polynomial in $|V(G)|$. 
 \end{theorem}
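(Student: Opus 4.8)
The plan is to prove the biconditional first and then describe the decision procedure.

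\emph{The biconditional.} The forward direction is immediate from the two Facts already established: given a reconfiguration $W$ from $\phi$ to $\psi$, Fact \ref{fact:necTV} shows the system $(W_v)$ of traces is topologically valid, and Fact \ref{fact:nectight} shows $W_c$ is constant whenever $c$ is fixed under $\phi$, i.e.\ lies on a closed walk tight under $\phi$. For the reverse direction, suppose $(W_v)$ is topologically valid with $W_c$ constant for every vertex $c$ of every tight closed walk; I would argue that $(W_v)$ is realizable, which by Definition \ref{def:realizable} produces a reconfiguration, necessarily from $\phi$ to $\psi$ since padding a system with repeated vertices leaves the endpoints of its walks unchanged. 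To see realizability, invoke Lemma \ref{lem:RealizAlg}: were $(W_v)$ not realizable, that lemma returns a closed walk $C$ tight under $\phi$, obtained as a directed cycle $a_1\to a_2\to\cdots\to a_t$ in the auxiliary digraph of its proof. Each arc $a_i\to a_{i+1}$ refers to the second vertices $W_{a_i}^1$ and $W_{a_{i+1}}^1$ in condition \eqref{eq:aux}, so every $W_{a_i}$ has length at least one, hence is not constant; but $a_1,\dots,a_t$ all lie on the tight closed walk $C$, contradicting the hypothesis. Therefore $(W_v)$ is realizable.

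\emph{The algorithm.} By the biconditional, and since for a topologically valid system the constancy condition is equivalent to realizability (Fact \ref{fact:nectight} one way, the argument above the other), it suffices to decide in polynomial time whether a realizable topologically valid system exists. Run the algorithm of Lemma \ref{lem:algStep1} at an arbitrary basepoint $r$. If it reports that no topologically valid $(\phi(r),\psi(r))$-walk exists, then by Fact \ref{fact:necTV} no reconfiguration exists and we answer NO. Otherwise it returns a topologically valid $W_r$ of length at most $2n$, where $n:=|V(G)|$, and Corollary \ref{cor:TValg} (equivalently Lemma \ref{lem:uniquesystem}) yields in polynomial time the system $(W_v)$ of reduced walks it generates. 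Run Lemma \ref{lem:RealizAlg} on $(W_v)$: if it reports $(W_v)$ realizable, answer YES; otherwise it returns a closed walk $C$ tight under $\phi$, and we fix a vertex $c_0\in V(C)$. Since $c_0$ is fixed under $\phi$, Fact \ref{fact:nectight} forces $W_{c_0}$ to be constant in every realizable topologically valid system, and Lemma \ref{lem:uniquesystem} shows there is at most one topologically valid system with $W_{c_0}$ constant---the one generated by the constant walk at $\phi(c_0)$ after reducing its walks. This is the unique remaining candidate. If $\phi(c_0)\neq\psi(c_0)$, there is no constant $(\phi(c_0),\psi(c_0))$-walk and we answer NO. Otherwise use Corollary \ref{cor:TValg} with basepoint $c_0$ to test whether the constant walk $(\phi(c_0))$ is topologically valid: if not, no topologically valid system has $W_{c_0}$ constant and we answer NO; if so, form the system $(W_v')$ it generates, run Lemma \ref{lem:RealizAlg} once more, and answer YES precisely when $(W_v')$ is realizable. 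Every subroutine used runs in time polynomial in $n$.

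\emph{The main obstacle.} The delicate point is the final case: a topologically valid walk returned by Lemma \ref{lem:algStep1} need not generate a system that is constant on the tight closed walks, because when the image of $G$ essentially wraps around a single primitive cycle of $H$ the topologically valid walk $W_r$ fails to be unique, so a different one may be required. What rescues the argument is that this freedom collapses the moment we insist on constancy at a single fixed vertex $c_0$ (Lemma \ref{lem:uniquesystem}), pinning down one further candidate system whose realizability---checkable by Corollary \ref{cor:TValg} and Lemma \ref{lem:RealizAlg}---decides the instance. I expect verifying the two equivalence/uniqueness claims (realizability $\Leftrightarrow$ constancy on tight closed walks for topologically valid systems; uniqueness of the system with $W_{c_0}$ constant, including the reduction to the reduced-walk case so that padding invariance applies) to be the part needing the most care, though each reduces to results already in hand.
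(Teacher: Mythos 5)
Your proof takes essentially the same approach as the paper and, if anything, supplies more detail: the published proof merely cites Lemma \ref{lem:RealizAlg} for sufficiency, while you actually spell out why a topologically valid system that is constant on tight closed walks must be realizable, and your description of the algorithm (re-run with a new basepoint on the returned tight walk, test the constant walk, invoke uniqueness via Lemma \ref{lem:uniquesystem}) matches the paper's step for step.

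One micro-step in your sufficiency argument is not quite right as written: you assert that each $W_{a_i}$ ``has length at least one, hence is not constant,'' but a constant walk can have any length (the walk $(w,w)$ is constant and has length $1$). The intended conclusion is still correct and can be recovered in two ways, either of which would close the gap. (i) Show directly that a vertex $u$ with a constant walk has no in-arcs in the auxiliary digraph: an arc $w\to u$ requires $W_u^1 = W_w^0$ and $W_u^0 \nadj W_w^1$, but if $W_u$ is constant then $W_u^0 = W_u^1 = W_w^0 \sim W_w^1$, a contradiction, so $u$ cannot lie on the directed cycle $a_1\to\cdots\to a_t$. (ii) Pass first to reduced walks, noting that reduction preserves both topological validity and constancy (a reduced constant walk is a single vertex), after which ``length $\geq 1$'' does entail ``non-constant.'' Option (ii) also harmonizes with your later appeal to Lemma \ref{lem:uniquesystem}, which is phrased for reduced systems. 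With either patch, the argument is complete and aligned with the paper's.
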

\begin{proof}
  The `only if' part of the first statement of the theorem is proved in Facts \ref{fact:necTV} and \ref{fact:nectight}.   The `suffciency' is proved in Lemma \ref{lem:RealizAlg}.  
  The validity algorithm comes from Lemma \ref{lem:algStep1} and Lemma \ref{lem:RealizAlg}.

 Indeed, choosing a vertex $r$ of $G$,  by Lemma \ref{lem:algStep1} we determine if there is a topologically valid $(\phi(r),\psi(r))$-path.  If there is not, we are done, and if there is, we get one with $W_r$ of length at most $2n$.  
 In fact we get the system $(W_v)$ of walks that it generates, and no path in this system can have length greater than $4n$ so the sum of the lengths of these paths are polynomial in $n$.   By Lemma \ref{lem:RealizAlg} we determine
 if this system of walks is realizable in polynomial time.  If it is we are done,  and if it is not, then the lemma returns a tight closed walk.  Choosing new $r$ in $C$, any realizable system of paths is constant on $r$. 
 If  $\phi(r) \neq \psi(r)$ there is no such system, and we are done.  If  $\phi(r) = \psi(r)$, set $W_r$ to be constant, and apply Corollary \ref{cor:TValg} to determine if  it is topolgically valid.  If it is apply Lemma \ref{lem:RealizAlg} we determine if it is  realizable.

\end{proof}

  %


\newcommand{\doi}[1]{\href{http://dx.doi.org/#1}{\small\nolinkurl{DOI: #1}}}
\renewcommand{\url}[1]{\href{https://arxiv.org/abs/#1}{\small\nolinkurl{arXiv: #1}}}
\bibliographystyle{plainnat}

\end{document}